\documentclass[a4paper,10pt,twoside]{amsart}

\usepackage[utf8]{inputenc}
\usepackage{amsmath, amsfonts, amssymb,amsthm}
\usepackage[mathscr]{eucal} 
\usepackage[pdftex]{graphicx}
\usepackage{color}
\usepackage{multicol}

\usepackage[T1]{fontenc}
\usepackage[sc]{mathpazo}
\linespread{1.05} 

\usepackage{enumerate}


\newcommand{\R}{\mathbb{R}}

\newcommand{\s}{\mathbb{S}}
\newcommand{\h}{\mathbb{H}}

\newcommand{\Ric}{\mathrm{Ric}}
\newcommand{\RP}{\R\mathrm{P}}

\newcommand{\df}{\,\mathrm{d}}

\newcommand{\vol}{\operatorname{area}}
\newcommand{\prodesc}[2]{\left\langle #1, #2 \right \rangle}
\newcommand{\abs}[1]{\left\lvert #1 \right\rvert}
\newcommand*{\defeq}{\mathrel{\vcenter{\baselineskip0.5ex \lineskiplimit0pt
                     \hbox{\scriptsize.}\hbox{\scriptsize.}}}%
                     =}
\DeclareMathOperator{\arccot}{arccot}

\newtheorem{theorem}{Theorem}
\newtheorem*{theorem*}{Theorem}
\newtheorem{proposition}{Proposition}

\newtheorem{lemma}{Lemma}

\theoremstyle{definition}

\theoremstyle{remark}
  \newtheorem{remark}{Remark}

\numberwithin{equation}{section}

\title[Compact embedded minimal surfaces in $\s^2\times\s^1$]{Compact embedded minimal surfaces in $\s^2\times\s^1$}

\author{José M. Manzano}
\address{Department of Geometry and Topology\\
University of Granada \\
Avda. Fuentenueva s/n, 18071, Granada, Spain}
\email{jmmanzano@ugr.es}

\author{Julia Plehnert}
\address{Discrete Differential Geometry Lab\\University of G\"ottingen\\Lotzestr. 16-18\\ 37083 G\"ottingen, Germany}
\email{j.plehnert@math.uni-goettingen.de}

\author{Francisco Torralbo}
\address{Department of Geometry and Topology\\
University of Granada \\
Avda. Fuentenueva s/n, 18071, Granada, Spain}
\email{ftorralbo@ugr.es}

\thanks{The first and third authors are partially supported by the Spanish MCyT-Feder research projects MTM2007-61775 and MTM2011-22547, as well as the Junta Andaluc\'{i}a Grants P06-FQM-01642 and P09-FQM-4496}

\subjclass[2010]{Primary 53A10; Secondary 53C30}


\setlength{\parskip}{0.2em}

\begin{document}

\begin{abstract}
We prove that closed surfaces of all topological types, except for the non-orientable odd-genus ones, can be minimally embedded in $\s^2\times\s^1(r)$, for arbitrary radius $r$. We illustrate it by obtaining some periodic minimal surfaces in $\s^2\times\R$ via conjugate constructions. The resulting surfaces can be seen as the analogy to the Schwarz P-surface in these homogeneous $3$-manifolds.
\end{abstract}

\maketitle

\section{Introduction}

The geometry and topology of $3$-manifolds with non-negative curvature  has been an active field of research during the last century. One of the properties of these $3$-manifolds is the fact that  they generally admit compact (without boundary) embedded minimal surfaces.  Many authors have contributed to the study of compact minimal surfaces in order to understand the geometry of the $3$-manifold (see, for instance,~\cite{AR,GR,MSY, SY}). In 1970,  Lawson~\cite{Lawson} proved that any compact orientable surface can be minimally embedded in the constant sectional curvature $3$-sphere $\s^3$. Lawson's result has also been extended to different $3$-manifolds such as the Berger spheres (see~\cite{Torralbo}).

The aim of this paper is to determine which compact surfaces admit minimal embeddings in the homogeneous product $3$-manifolds $\s^2\times\s^1(r)$, $r>0$, where $\s^2$ denotes the constant curvature one sphere, and $\s^1(r)$ is the radius $r$ circle, i.e., $\s^1(r)$ has length $2\pi r$ and will be identified with $\R/2\pi r$. These $3$-manifolds have a distinguished unit vector field $\xi_{(p,t)} = (0, 1)$, which is parallel and generates the \emph{fibers} of the natural Riemannian submersion $\pi\colon  \s^2 \times \s^1(r) \rightarrow \s^2$. Given a vector field $X$ we will say that $X$ is \emph{horizontal} if it is orthogonal to $\xi$ and \emph{vertical} if it is parallel to $\xi$. Also, given a differentiable curve $\alpha$ in $\s^2\times \s^1(r)$ we will say that it is \emph{horizontal} (resp.\ \emph{vertical}) if its tangent vector is pointwise horizontal (resp.\ vertical). We can express the curvature tensor in terms of the metric and $\xi$ as
\begin{equation}\label{eq:curvature}
\begin{split}
R(X, Y)Z = &\langle Y, Z \rangle X - \langle X, Z\rangle Y + \langle X, \xi\rangle \langle Z, \xi \rangle Y - \langle Y, \xi\rangle \langle Z, \xi \rangle X + \\
&(\langle X, Z \rangle \langle Y, \xi\rangle - \langle Y, Z \rangle \langle X, \xi \rangle )\xi,
\end{split}
\end{equation}
and hence $\Ric(X) = \|X\|^2 - \langle X, \xi\rangle^2\geq 0$ for each vector field $X$.

We will begin by briefly discussing known examples with low genus. Recall that a non-orientable compact surface $\Sigma$ has (non-orientable) genus $g$ if it is topologically equivalent to the connected sum of $g$ projective planes (equivalently, $\chi(\Sigma) = 2 - g$, where $\chi(\Sigma)$ stands for the Euler characteristic of the surface $\Sigma$).

First of all, the only spheres minimally immersed in $\s^2\times\s^1(r)$ are the horizontal slices $\s^2\times\{t_0\}$ (it follows from lifting such a sphere to a minimal sphere in $\s ^2\times\R$ and applying the maximum principle with respect to a foliation of $\s ^2\times\R$ by slices), which are always embedded. This implies that the projective plane cannot be minimally immersed in $\s^2\times\s^1(r)$ for any $r>0$. Moreover, they are the only compact minimal stable surfaces in $\s^2 \times \s^1(r)$ (cf.~\cite[Corollary~1]{TU13}).

If the Euler characteristic is zero the situation is quite different in both orientable and non-orientable cases:
\begin{enumerate}[$\bullet$]
  \item On the one hand, \emph{vertical helicoids} in $\s^2\times\R$ form a $1$-parameter family of minimal surfaces ruled by ho\-ri\-zon\-tal geo\-de\-sics invariant under a $1$-parameter group of ambient screw-motions, see~\cite{Rosenberg}. Each vertical helicoid is determined by its \emph{pitch} $\ell \in [0, +\infty[$ (i.e., twice the distance between successive sheets of the helicoids). Hence, taking quotients by the vertical translation $(p,t)\mapsto(p,t+2\pi r)$ in $\s^2\times\R$, vertical helicoids with appropriate pitches give rise to embedded minimal tori ($\ell = 2\pi r$) and Klein bottles ($\ell = 4\pi r$) in $\s^2\times\s^1(r)$ for any $r > 0$. If $\ell \to +\infty$, the helicoid converges to the \emph{vertical cylinder} $\Gamma \times \R$, $\Gamma$ being a geodesic of $\s^2$.

  \item On the other hand, one can consider examples foliated by circles: if the centers of the circles lie in the same vertical axis $\{p_0\} \times \R$, then they are the rotationally invariant examples studied by Pedrosa and Ritoré~\cite{PR99}. They are called \emph{unduloids} by analogy to the Delaunay surfaces and form a $1$-parameter family of minimal surfaces in $\s^2\times\R$. Moreover, they are singly periodic in the $\R$ factor with period $T \in \,]0, 2\pi[$ (if $T \to 0$ the unduloid converges to a double cover of a slice, whereas if $T \to 2\pi$ the unduloid converges to the vertical cylinder).

If the centers of the circle do not lie in the same vertical axis, we get the Riemann-type examples studied by Hauswirth~\cite[Theorem 4.1]{Haus06}. They are also singly periodic and produce embedded minimal tori in $\s^2\times \s^1(r)$ for all $r > 0$ (see also~\cite[\S 7]{Ros03} for a beautiful description).
Note that, thanks to \cite[Theorem 4.8]{Urb12}, a compact minimal surface $\Sigma$ immersed in $\s^2 \times \s^1(r)$, $r \geq 1$, foliated by circles has index greater than or equal to one (the index is one only for $r=1$ and $\Sigma=\Gamma\times\s^1(1)$, $\Gamma\subset\s^2$  being a great circle). A similar estimation of the index for $r < 1$ remains an open question (see~\cite[Remark 4.9]{Urb12}).
\end{enumerate}

In the higher genus case, Rosenberg~\cite{Rosenberg} constructed compact minimal surfaces in $\s^2\times\s^1(r)$, for all $r>0$, by a technique similar to Lawson's. More specifically, he defined a polygon in $\s^2\times\R$ made out of vertical and horizontal geodesics depending on two parameters $\gamma=\frac{\pi}{d}$, $d\in\mathbb{N}$, and $\tilde{h}>0$ (see Figure~\ref{f:boundary1}). Then he solved the Plateau problem with respect to this contour and obtained a minimal disk that can be reflected across its boundary in virtue of Schwarz's reflection principle. This procedure gives a complete surface that projects to the quotient $\s^2\times\s^1(r)$, $r=\frac{\tilde{h}}{\pi}$, as a compact embedded minimal surface. Unfortunately, it fails to be orientable as stated in~\cite{Rosenberg}, being a non-orientable surface of Euler characteristic $\chi=2(1-d)$ so its (non-orientable) genus $k=2-\chi=2d$ is always even. One can solve this issue by considering $r=2\frac{\tilde{h}}{\pi}$ (i.e., doubling the vertical length, which represents a $2$-fold cover of the non-orientable examples), so the resulting minimal surface has Euler characteristic $\chi=4(1-d)$ and its (orientable) genus is $g=1-\frac{\chi}{2}=2d-1$, which turns out to be odd. When $d = 2m$ is even, Rosenberg's examples induce one-sided compact non-orientable embedded minimal surfaces in the quotient $\RP^2 \times \s^1(r)$ of odd genus $1+2m$, $m \geq 1$.

Very recently, Hoffman, Traizet and White obtained a class of properly embedded minimal surfaces in $\s^2\times\R$ called \emph{periodic} genus $g$ helicoids. More explicitly, given a genus $g\geq 1$, a radius $r>0$, and a helicoid $H\subset\s^2\times\R$ containing the horizontal geodesics $\Gamma\times\{0\}$ and $\Gamma\times\{\pm\pi r\}$ for some great circle $\Gamma\subset\s^2$, \cite[Theorem 1]{HTW} yields the existence of two compact orientable embedded minimal surfaces $M^+$ and $M^-$ with genus $g$ in the quotient of $\s^2\times\R$ by the translation $(p,t)\mapsto(p,t+2\pi r)$. The existence of infinitely-many non-congruent helicoids $H$ satisfying the conditions above implies the existence of infinitely-many non-congruent compact orientable embedded minimal surfaces in $\s^2\times\s^1(r)$ with genus $g$. The even genera examples of Hoffman-Traizet-White produce compact minimal surfaces in the quotient $\RP^2 \times \s^1(r)$, for all $r>0$.

Finally, it is worth mentioning the work of Coutant~\cite[Theorem 1.0.2]{Coutant12}, where Riemann-Wei type minimal surfaces in $\s^2\times \R$ are constructed.  Some of them give rise to compact embedded orientable minimal surfaces of arbitrary genus $g \geq 4$ in the quotient $\s^2 \times \s^1(r)$ for $r$ small enough.

In Section~\ref{sec:non-existence} we will state the main result of the paper proving that there are no compact embedded minimal surfaces in $\s^2\times\s^1(r)$ of the remaining topological types (i.e., non-orientable odd-genus surfaces) for any $r>0$:
\begin{quote}
\emph{Every compact surface but the odd Euler characteristic ones can be minimally embedded in $\s^2\times \s^1(r)$ for any $r > 0$.}
\end{quote}
We will also discuss some interesting topological properties of compact minimal surfaces in $\s^2\times\R$ that will lead to the non-existence result.

  Although we have shown that there exist compact embedded minimal examples of all allowed topological types in $\s^2\times\s^1(r)$, Section~\ref{sec:examples} will be devoted to obtain Schwarz P-type examples in $\s^2\times\s^1(r)$. More precisely:

  \begin{enumerate}[$\bullet$]
    \item \emph{Triply-periodic odd-genus orientable examples invariant by the isometry group of tilings of the sphere and a vertical translation (see Figures~\ref{f:cube-tessellation} and~\ref{f:balloon-tessellation})}.

    \item \emph{Arbitrary genus orientable surfaces in $\s^2\times \s^1(r)$ for $r$ small enough (cf.\ Proposition~\ref{prop:orientable-arbitrary-genus-examples}).}
  \end{enumerate}

All the examples will be constructed via Lawson's technique~\cite{Lawson}, and complementing it by considering the conjugate minimal surface (see~\cite{HST}) instead of the original solution to the Plateau problem for a geodesic polygon. In this case, it is well known that the boundary lines of the conjugate surface are curves of planar symmetry since the initial surface is bounded by geodesic curves. Again, reflecting the conjugate surface across its edges will produce a complete example. This technique, known as \emph{conjugate Plateau construction} has been applied in different situations (e.g., see~\cite{MT,Plehnert,Plehnert2}).

It is worth mentioning that a similar construction leads to classical P-Schwarz minimal surfaces in $\R^3$, but our construction can be also implemented in the product space $\h^2\times\R$  to produce embedded triply-periodic minimal surfaces with the symmetries of a tessellation of $\h^2$ by regular polygons together with a $1$-parameter group of vertical translations. We would also like to mention the minimal surfaces of $\h^2\times \R$ constructed in~\cite{MRR} also by similar techniques, specially the ones showed in~\cite[Section~6]{MRR} that are invariant by the isometry group of a regular tiling of $\h^2$.

We would like to thank Harold Rosenberg for his valuable comments that helped us improve this paper, as well as the referee, whose thorough report has encouraged us to clarify some aspects of the original manuscript.

\section{Topological non-existence results}\label{sec:non-existence}

We will begin by describing the intersection of two compact minimal surfaces in $\s^2\times\s^1(r)$. The ideas in the proof are adapted from those of Frankel~\cite{Frankel} (see also~\cite{AR,GR}). In~\cite[Theorem 4.3]{Rosenberg}, a similar result is proved for properly embedded minimal surfaces in $\s^2\times\R$ by using a different approach.

\begin{proposition}\label{prop:intersection}
Let $\Sigma_1$ and $\Sigma_2$ be two compact minimal surfaces immersed in $\s^2\times\s^1(r)$. If $\Sigma_1\cap\Sigma_2=\emptyset$, then $\Sigma_1$ and $\Sigma_2$ are two horizontal slices.
\end{proposition}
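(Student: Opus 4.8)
The plan is to run a Frankel-type argument on the distance between the two surfaces, the one delicate point being that $\Ric$ is here only nonnegative, with kernel exactly the vertical direction. Assume $\Sigma_1\cap\Sigma_2=\emptyset$ and (passing to components) that both are connected. By compactness the distance $L=\operatorname{dist}(\Sigma_1,\Sigma_2)>0$ is attained along a unit-speed geodesic $\gamma\colon[0,L]\to\s^2\times\s^1(r)$ meeting $\Sigma_1$ at $p_1=\gamma(0)$ and $\Sigma_2$ at $p_2=\gamma(L)$ orthogonally. Fixing an orthonormal basis $\{e_1,e_2\}$ of $T_{p_1}\Sigma_1$ and extending it by parallel transport to fields $E_1,E_2$ along $\gamma$, codimension one forces $E_i(L)\in T_{p_2}\Sigma_2$, so each $E_i$ is an admissible variation field. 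Since $\gamma$ minimizes length among curves joining the surfaces, the second variation satisfies $I(E_i,E_i)\geq 0$. As the $E_i$ are parallel, the interior integrand reduces to $-\langle R(E_i,\gamma')\gamma',E_i\rangle$; the boundary terms of $I$ involve the second fundamental forms of $\Sigma_1,\Sigma_2$ contracted against $\gamma'$, and summing over $i$ they give the mean curvatures in the direction $\gamma'$, hence vanish by minimality. Because $\{E_1,E_2,\gamma'\}$ is an orthonormal frame along $\gamma$ and $\langle R(\gamma',\gamma')\gamma',\gamma'\rangle=0$, one obtains
\[
0\leq\sum_{i=1}^{2} I(E_i,E_i)=-\int_0^{L}\Ric(\gamma'(t))\df t\leq 0 ,
\]
so $\Ric(\gamma')\equiv 0$. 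By $\Ric(X)=\|X\|^2-\langle X,\xi\rangle^2$ this yields $\langle\gamma',\xi\rangle^2=1$, i.e.\ $\gamma$ is a vertical segment of a fiber.

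Being vertical, $\gamma$ shows that $p_1$ and $p_2$ lie on a common fiber and that $T_{p_1}\Sigma_1$ and $T_{p_2}\Sigma_2$ are horizontal. I would upgrade this pointwise information using the minimizing property. Writing $\Sigma_1,\Sigma_2$ near these points as vertical graphs $t=f_1$ and $t=f_2$ over a common horizontal disc around $\pi(p_1)=\pi(p_2)$, every vertical segment joining the sheets has length $f_2-f_1\geq\operatorname{dist}(\Sigma_1,\Sigma_2)=L$, while $f_2-f_1=L$ at the centre; hence $f_1\leq f_2-L$ with equality at the centre. Thus $\Sigma_1$ lies on one side of the vertical translate $T_{-L}\Sigma_2$ and touches it at $p_1$, where $T_s(p,t)=(p,t+s)$. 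Both surfaces being minimal, the interior tangency principle (strong maximum principle) gives $\Sigma_1=T_{-L}\Sigma_2$ near $p_1$, hence everywhere by analytic continuation; equivalently $\Sigma_2=T_L\Sigma_1$.

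It remains to deduce that $\Sigma_1$ is a slice. The height function $h$ is harmonic on any minimal surface (as $\xi$ is parallel and $\vec H=0$), so a compact minimal graph over $\s^2$ would project to a harmonic $\s^1(r)$-valued map on the simply connected $\s^2$, necessarily constant; thus a non-slice $\Sigma_1$ is not such a graph and meets some nontrivial vertical translate of itself. I would then sweep by the $T_s$: the set of $s$ with $T_s\Sigma_1\cap\Sigma_1=\emptyset$ is open and contains $L$, and the component containing $L$ is a proper arc (otherwise $\Sigma_1$ is disjoint from all its translates, hence a graph, hence a slice). At least one endpoint $\beta\neq 0$ of this arc is a first-contact parameter, where $T_\beta\Sigma_1$ touches $\Sigma_1$ from one side, so the tangency principle gives $T_\beta\Sigma_1=\Sigma_1$. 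The invariance subgroup $\{s:T_s\Sigma_1=\Sigma_1\}$ is then a nontrivial closed subgroup of $\s^1(r)$: if it is all of $\s^1(r)$, then $\Sigma_1=\Gamma\times\s^1(r)$ is a vertical cylinder, which meets every translate and contradicts $T_L\Sigma_1\cap\Sigma_1=\emptyset$; if it is cyclic with minimal period $c$, then (using $L\notin$ the subgroup, so $L\bmod c\in(0,c)$ lies in the disjointness set) repeating the first-contact argument inside $(0,c)$ produces a period in $(0,c)$, contradicting minimality of $c$. Hence $\Sigma_1$, and with it $\Sigma_2=T_L\Sigma_1$, is a horizontal slice.

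The crux is the degenerate case of Frankel's theorem: positivity of $\Ric$ would at once contradict disjointness, whereas here equality in the second variation is attainable and pins the minimizing geodesic to the single direction in which $\Ric$ vanishes. The second, more delicate obstacle is converting the resulting local tangency into the global conclusion that the surfaces are slices, for which the sweeping argument above—rather than one application of the maximum principle—seems unavoidable.
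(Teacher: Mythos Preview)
Your Frankel second-variation argument and the subsequent application of the maximum principle to obtain $\Sigma_2=T_L\Sigma_1$ are correct and coincide with the paper's proof.

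Where you diverge is in the final step, and here the paper's route is far shorter. Once $\Sigma_2=T_d\Sigma_1$ with $d=\operatorname{dist}(\Sigma_1,\Sigma_2)$, the paper simply observes that for \emph{every} $p\in\Sigma_1$ the vertical segment from $p$ to $T_d(p)\in\Sigma_2$ has length exactly $d$; since this segment realizes the distance between the two surfaces it must be orthogonal to $\Sigma_1$ at $p$ (otherwise first variation would shorten it), forcing $\Sigma_1$ to be horizontal at every point. No sweeping is needed.

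Your sweeping argument, by contrast, has two genuine gaps. First, the assertion that at a first-contact parameter $\beta$ the surface $T_\beta\Sigma_1$ touches $\Sigma_1$ ``from one side'' is not justified. The hypothesis is only that the surfaces are \emph{immersed}, and even in the embedded case the common tangent plane at the contact point may be vertical; then the vertical translation moves tangentially to both surfaces and the naive one-sidedness reasoning breaks down (one would have to invoke the nodal structure of the difference of two tangent minimal graphs and argue that nontrivial intersection persists under perturbation). Second, in your cyclic case with minimal period $c$, you have not excluded the possibility that the disjointness arc containing $L\bmod c$ is all of $(0,c)$; in that event the endpoints are $0$ and $c$ and the first-contact argument only recovers the known periods, yielding no contradiction with the minimality of $c$. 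Both gaps can be repaired with additional work, but the paper's one-line shortening argument avoids them entirely.
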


\begin{proof}
Let $\gamma\colon [a,b]\to\s^2\times\s^1(r)$ be a unit-speed curve satisfying $\gamma(a)\in\Sigma_1$, $\gamma(b)\in\Sigma_2$ and minimizing the distance from $\Sigma_1$ to $\Sigma_2$. This guarantees that $\gamma$ is orthogonal to $\Sigma_1$ at $\gamma(a)$ and to $\Sigma_2$ at $\gamma(b)$. For any parallel vector field $X$ along $\gamma$ and orthogonal to $\gamma'$, we can produce a variation $\gamma_t$ of $\gamma=\gamma_0$ with variational field $X$. Note that $\gamma_t$ can be chosen so that $\gamma_t(a)\in\Sigma_1$ and $\gamma_t(b)\in\Sigma_2$ for all $t$, since $X$ is orthogonal to $\gamma'$. Let $\{e_1, e_2, \gamma'(a)\}$ an orthonormal reference at $T_{\gamma(a)}\s^2\times \s^1(r)$ and $X_j$ the parallel transported vector field along $\gamma$ with $X_j(\gamma(a)) = e_j$, $j = 1, 2$. The function $\ell_X(t)=\mathrm{Length}(\gamma_t)$ satisfies $\ell_X'(0)=0$ and
\begin{equation}\label{eqn:second-variation}
\ell''_{X_1}(0) + \ell''_{X_2}(0) = H_1 - H_2 -\int_\gamma\Ric(\gamma') = -\int_\gamma \Ric(\gamma'),
\end{equation}
where $H_j$ is the mean curvature of $\Sigma_j$, $j = 1, 2$ (see~\cite[pages~69-70]{Frankel}).

The minimization property tells us that $\ell''_{X_1}(0) + \ell''_{X_2}(0)\geq 0$. Since  $\Ric(Y)\geq 0$ for any vector field $Y$ in $\s^2\times\s^1(r)$ and $\Ric(Y)=0$ if and only if $Y$ is vertical (see~\eqref{eq:curvature}), we deduce from equation~\eqref{eqn:second-variation} that $\gamma'$ is vertical. This means that the distance $d$ from $\Sigma_1$ to $\Sigma_2$ is realized by a vertical geodesic. Hence, translating $\Sigma_1$ vertically by distance $d$, we produce a contact point at which the translated surface is locally at one side of $\Sigma_2$. The maximum principle for minimal surfaces tells us that $\Sigma_2$ coincides with the vertical translated copy of $\Sigma_1$ at distance $d$.

Finally, we will suppose that $\Sigma_1$ is not horizontal at some point $p$ and reach a contradiction. In that case, note that the previous argument guarantees the existence of a vertical segment of distance $d$ joining $p$ with a point in $\Sigma_2$, but, since this segment is not orthogonal to any of the two surfaces, we could shorten it and provide a curve from $\Sigma_1$  to $\Sigma_2$ whose length is strictly less than $d$, which is a contradiction.
\end{proof}

As a consequence of this result, any compact embedded minimal surface in $\Sigma\subset\s^2\times\s^1(r)$ is either connected or a finite union of horizontal slices. Moreover, the lift of a connected compact embedded minimal surface of $\s^2\times \s^1(r)$ different from an horizontal slice to $\s^2\times\R$ is connected and periodic.

Let us now study the orientability of a compact minimal surface in $\s^2\times\s^1(r)$.

\begin{lemma}\label{lema:separation-S2xR}
Let $\Sigma\subset\s^2\times\R$ be a connected properly embedded minimal surface. Then $(\s^2\times\R)\smallsetminus\Sigma$ has two connected components and $\Sigma$ is orientable.
\end{lemma}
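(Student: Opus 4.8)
The plan is to prove the statement by purely topological means: the minimality of $\Sigma$ plays no role here, and the only features I would use are that $\Sigma$ is properly embedded (hence a closed subset meeting every compact set in a compact set) and that the ambient manifold $M\defeq\s^2\times\R$ is connected, orientable and simply connected. The central tool is mod-$2$ intersection theory of loops with the hypersurface $\Sigma$. Because $M$ is simply connected, every loop in $M$ is null-homotopic; since the mod-$2$ intersection number of a loop with a properly embedded codimension-one submanifold is invariant under homotopies of the loop, I would first record the key fact that \emph{every} loop in $M$, after being made transverse to $\Sigma$, meets $\Sigma$ in an even number of points.

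Granting this, I would establish two-sidedness and orientability together. Suppose, for contradiction, that $\Sigma$ is one-sided, i.e.\ its normal line bundle $\nu$ is non-trivial. Then there is a loop $\alpha\subset\Sigma$ along which the coorientation reverses, and pushing $\alpha$ slightly off $\Sigma$ in the normal direction produces a loop in $M$ that crosses $\Sigma$ transversally exactly once; this loop has odd mod-$2$ intersection number with $\Sigma$, contradicting the previous paragraph. Hence $\nu$ is trivial and $\Sigma$ is two-sided. Orientability is then immediate: along $\Sigma$ one has the splitting $TM|_\Sigma = T\Sigma\oplus\nu$, and since $TM|_\Sigma$ is orientable ($M$ is orientable) and $\nu$ is a trivial line bundle, $T\Sigma$ is orientable, that is, $\Sigma$ is orientable.

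For the separation statement I would exploit two-sidedness to fix a global continuous unit normal and a tubular neighbourhood $\Sigma\times(-\epsilon,\epsilon)$, whose two halves $U^+=\Sigma\times(0,\epsilon)$ and $U^-=\Sigma\times(-\epsilon,0)$ are connected (as $\Sigma$ is connected) and therefore lie in single connected components $C^+$ and $C^-$ of $M\smallsetminus\Sigma$. To see that there are \emph{at most} two components, I would argue that each component $C$ of $M\smallsetminus\Sigma$ has non-empty frontier contained in $\Sigma$ (otherwise $C$ would be open and closed in the connected manifold $M$, forcing $C=M$ and $\Sigma=\emptyset$); choosing a point $p\in\Sigma\cap\partial C$ and looking inside the tubular neighbourhood near $p$ shows that $C$ meets $U^+$ or $U^-$, whence $C=C^+$ or $C=C^-$. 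To see that there are \emph{at least} two, i.e.\ $C^+\neq C^-$, I would assume $C^+=C^-$, join $U^+$ to $U^-$ by a path inside $M\smallsetminus\Sigma$, and close it up with a short arc crossing $\Sigma$ exactly once; the resulting loop has odd mod-$2$ intersection number with $\Sigma$, again contradicting the first paragraph. Thus $M\smallsetminus\Sigma$ has exactly the two components $C^+$ and $C^-$.

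The only genuinely delicate point, and the one I would write out with care, is the foundational claim of the first paragraph in the \emph{non-compact} setting: that the mod-$2$ intersection number of a loop with $\Sigma$ is well defined and homotopy invariant. This is exactly where properness of the embedding is essential, since it guarantees that $\Sigma$ is closed and that its intersection with the compact image of any loop, or of any loop homotopy $\s^1\times[0,1]$, is compact, so that the standard transversality and genericity arguments apply and no intersection point can escape to infinity. Everything else is soft point-set and bundle-theoretic topology, and I expect no analytic input from minimal surface theory to be needed.
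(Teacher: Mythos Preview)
Your proof is correct and takes a genuinely different route from the paper's. You argue purely topologically: since $\s^2\times\R$ is simply connected and orientable, mod-$2$ intersection theory forces every loop to meet $\Sigma$ evenly, which yields two-sidedness, orientability, and separation in one stroke. The minimality hypothesis is never invoked, so your argument in fact proves the stronger statement that \emph{any} connected properly embedded surface in $\s^2\times\R$ separates and is orientable. The paper, by contrast, exploits both the product structure and the minimality: it slices by the horizontal spheres $S_t=\s^2\times\{t\}$, uses the local structure of the intersection of two minimal surfaces to see that $\Sigma\cap S_t$ is an equiangular system of curves (hence two-colorable in $S_t$), and then lets the coloring vary continuously in $t$ to assemble the two components $W_1,W_2$. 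Your approach is cleaner and more general; the paper's approach is more concrete and yields an explicit slice-by-slice description of the two sides, though this extra information is not actually needed later. One small wording point: when you say that pushing $\alpha$ off ``produces a loop \ldots\ that crosses $\Sigma$ transversally exactly once,'' what you really obtain is an arc in $M\smallsetminus\Sigma$ whose endpoints lie on opposite sides, which you then close by a short transverse segment; you clearly have this in mind, but it would read better to say so explicitly.
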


\begin{proof}
If $\Sigma$ is a horizontal slice, the result is trivial. Otherwise, given $t\in\R$, let us consider $S_t=\s^2\times\{t\}$. Then the set $\Sigma\cap S_t$ is the (non-empty) intersection of two minimal surfaces, hence it consists of a equiangular system of curves in the $2$-sphere $S_t$. We will denote by $C_t$ the set of intersection points of these curves (possibly empty), and it is well-known that an even number of such curves meet at each point of $C_t$. The fact that $\Sigma$ is properly embedded guarantees that $\cup_{t\in\R}C_t$ consists of isolated points.

We can decompose $S_t\smallsetminus\Sigma=A^1_t\cup A^2_t$ in such a way that $A^1_t,A^2_t\subset S_t$ are open and, given $i\in\{1,2\}$, the intersection of the closures of two connected components of $A^i_t$, for $i = 1, 2$, is contained in $C_t$. In other words, we are painting the components of $S_t\smallsetminus\Sigma$ in two colors so that adjacent components have different color.
Observe that the sets $\Sigma\cap S_t$ depend continuously on $t\in\R$ so it is clear that $A^1_t$ and $A^2_t$ can be chosen in such a way that $W_i=\cup_{t\in\R}A^i_t$ is open for $i\in\{1,2\}$. As $W_1\cap W_2=\emptyset$ and $W_1\cup W_2=(\s^2\times\R)\smallsetminus\Sigma$, we get that $W_1$ and $W_2$ are the connected components of $(\s^2\times\R)\smallsetminus\Sigma$. In particular, $\Sigma$ is orientable.
\end{proof}

If $\Sigma$ is a connected surface embedded in a orientable $3$-manifold $M$ and $M\smallsetminus\Sigma$ has two connected components, then $\Sigma$ is well-known to be orientable (the surface $\Sigma$ is said to \emph{separate} $M$). The converse is false in $\s^2\times\s^1(r)$ as horizontal slices show, but they turn out to be the only minimal counterexamples.

\begin{proposition}\label{lema:separation-S2xS1}
Let $\Sigma\subset\s^2\times\s^1(r)$ be a compact embedded minimal surface, different from a finite union of horizontal slices. Then $\Sigma$ separates $\s^2\times\s^1(r)$ if and only if $\Sigma$ is orientable.
\end{proposition}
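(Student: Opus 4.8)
The plan is to pass to the covering $\pi\colon\s^2\times\R\to\s^2\times\s^1(r)$ given by the quotient by the vertical translation $T\colon(p,t)\mapsto(p,t+2\pi r)$, and to reduce the whole statement to the behaviour of this single deck transformation. Since $\Sigma$ is not a finite union of slices, the corollary to Proposition~\ref{prop:intersection} forces $\Sigma$ to be connected, and the discussion following that proposition gives that the lift $\tilde\Sigma=\pi^{-1}(\Sigma)$ is a connected, $T$-invariant (periodic), properly embedded minimal surface in $\s^2\times\R$. Applying Lemma~\ref{lema:separation-S2xR}, $\tilde\Sigma$ separates $\s^2\times\R$ into exactly two open connected components $W_1,W_2$ and is orientable; since the ambient space is orientable this means $\tilde\Sigma$ is two-sided, so I would fix a global unit normal $\tilde N$ chosen to point into $W_1$ everywhere along $\tilde\Sigma$.

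The core of the argument is to track the action of $T$ on the two sides and on the normal at the same time. As $T$ is an ambient isometry with $T(\tilde\Sigma)=\tilde\Sigma$, it permutes $\{W_1,W_2\}$, so either $T(W_1)=W_1$ or $T$ interchanges them. In parallel, because $\tilde\Sigma$ is connected its only global unit normals are $\pm\tilde N$, whence $dT\circ\tilde N\circ T^{-1}=\varepsilon\,\tilde N$ for a single sign $\varepsilon\in\{+1,-1\}$ (and $-\tilde N$ transforms with the same sign). The key point I would establish is that these two dichotomies agree: since $\tilde N$ points into $W_1$, the pushforward $dT_x(\tilde N(x))$ points into $T(W_1)$ at $T(x)$, so $\varepsilon=+1$ is equivalent to $T(W_1)=W_1$ and $\varepsilon=-1$ is equivalent to $T(W_1)=W_2$.

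It then remains to read each case downstairs. If $\varepsilon=+1$, then $\tilde N$ is $T$-invariant and descends to a global unit normal on $\Sigma$, so $\Sigma$ is two-sided, i.e.\ orientable; moreover $T$ preserves each $W_i$, so $\pi^{-1}(\pi(W_i))=W_i$, the sets $\pi(W_1),\pi(W_2)$ are disjoint, open, connected with union $(\s^2\times\s^1(r))\smallsetminus\Sigma$, and $\Sigma$ separates. If instead $\varepsilon=-1$, neither $\tilde N$ nor $-\tilde N$ is $T$-invariant, so no unit normal descends and $\Sigma$ is one-sided, hence non-orientable; and since $T$ swaps the sides, $\pi^{-1}(\pi(W_1))=W_1\cup W_2$, giving $\pi(W_1)=\pi(W_2)=(\s^2\times\s^1(r))\smallsetminus\Sigma$, which is connected, so $\Sigma$ does not separate. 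This yields the chain of equivalences: $\Sigma$ orientable $\Leftrightarrow\varepsilon=+1\Leftrightarrow T$ fixes the sides $\Leftrightarrow\Sigma$ separates, proving both implications simultaneously; the direction ``separates $\Rightarrow$ orientable'' is in any event the standard fact recalled just before the statement.

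The step I expect to be the main obstacle is the simultaneous bookkeeping in the second paragraph: one must check carefully that the single sign $\varepsilon$ controlling the $T$-action on the normal is \emph{the same} sign controlling the $T$-action on $\{W_1,W_2\}$, and that $T$-invariance of $\tilde N$ corresponds to two-sidedness of the quotient $\Sigma$ (not merely of $\tilde\Sigma$, which is always two-sided by the lemma). The remaining ingredients — the covering-space properties of $\pi$, properness of the lift, and the descent of $T$-invariant open sets and vector fields — are routine.
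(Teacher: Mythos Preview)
Your proof is correct and follows essentially the same route as the paper's: lift to $\s^2\times\R$, apply Lemma~\ref{lema:separation-S2xR} to obtain the two components $W_1,W_2$, and analyze whether the deck transformation $T$ preserves or swaps them, linking this dichotomy to whether the global unit normal on $\tilde\Sigma$ descends to $\Sigma$. The only difference is organizational: the paper proves just the non-trivial implication (orientable $\Rightarrow$ separates) by contradiction, whereas you package both directions into the single sign $\varepsilon$ and obtain the biconditional in one pass.
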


\begin{proof}
We will suppose that $\Sigma$ is orientable and prove that it separates $\s^2\times\s^1(r)$. In order to achieve this, we consider the projection $\pi\colon \s^2\times\R\to\s^2\times\s^1(r)$ and the lifted surface $\widetilde\Sigma\subset\s^2\times\R$ such that $\pi(\widetilde\Sigma)=\Sigma$. Then $\widetilde\Sigma$ is properly embedded, and connected as a consequence of Proposition~\ref{prop:intersection}. Lemma~\ref{lema:separation-S2xR} states that we can decompose it in connected components $(\s^2\times\R)\smallsetminus\widetilde\Sigma=W_1\cup W_2$.

Let $\phi_r\colon \s^2\times\R\to\s^2\times\R$ be the vertical translation $\phi_r(p,t)=(p,t+2\pi r)$. As $\phi_r(\widetilde\Sigma)=\widetilde\Sigma$ there are two possible cases, $\phi_r$ either preserves $W_1$ and $W_2$ or swaps them:
\begin{enumerate}[(1)]
 \item If $\phi_r(W_1)=W_1$ and $\phi_r(W_2)=W_2$, then $\pi(W_1)\cap\pi(W_2)=\emptyset$. In this case $\Sigma$ separates $\s^2\times\s^1(r)$.
 \item If $\phi_r(W_1)=W_2$ and $\phi_r(W_2)=W_1$, then $\pi(W_1)=\pi(W_2)=(\s^2\times\s^1(r))\smallsetminus\Sigma$. In particular, $(\s^2\times\s^1(r))\smallsetminus\Sigma$ is connected and $\Sigma$ does not separate $\s^2\times\s^1(r)$. Since $\phi_r$ swaps $W_1$ and $W_2$ the unit normal vector field $\tilde{N}$ of $\tilde{\Sigma}$ does not induce a unit normal vector field on $\Sigma$. This contradicts the fact that $\Sigma$ is orientable because $\tilde{N}$ must be the lift of the unit normal vector field $N$ of $\Sigma$.\qedhere
\end{enumerate}
\end{proof}

We now state our main result, which is inspired by~\cite[Proposition 2]{Ros}.
\begin{theorem}
Let $\Sigma$ be a compact embedded non-orientable minimal surface  in $\s^2\times\s^1(r)$. Then $\Sigma$ has an even (non-orientable) genus.
\end{theorem}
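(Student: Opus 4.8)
The plan is to read the parity of $\chi(\Sigma)$ off the level sets of the vertical coordinate restricted to $\Sigma$. Since a non-orientable surface of genus $g$ has $\chi=2-g$, it suffices to prove that $\chi(\Sigma)$ is even. First I would reduce to the case where $\Sigma$ is connected and is not a slice: by the discussion following Proposition~\ref{prop:intersection}, a compact embedded minimal surface is either connected or a finite union of horizontal slices, and the slices are spheres and hence orientable; so a non-orientable $\Sigma$ is automatically connected and, not being a sphere, is not contained in a slice.

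Let $t\colon\Sigma\to\s^1(r)$ be the vertical coordinate. As $\xi=\bar\nabla t$ is parallel we have $\bar\nabla^2 t=0$, and since $\Sigma$ is minimal $t$ is harmonic on $\Sigma$; being non-constant, it has no local extrema by the maximum principle. Hence its critical points---exactly the points where the tangent plane is horizontal---are isolated, and in a conformal coordinate $z$ centred at such a point $p$ one has $t-t(p)=\operatorname{Re}(az^{m_p})+o(|z|^{m_p})$ with $a\neq0$ and $m_p\ge 2$. I would then observe that the tangential gradient $\xi^\top=\xi-\langle\xi,N\rangle N$ is a globally defined, orientation-independent vector field on $\Sigma$ whose zeros are precisely these critical points, and whose index at $p$ is $1-m_p$. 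Applying the Poincaré--Hopf theorem, which is valid on the closed (possibly non-orientable) surface $\Sigma$, gives
\[
\chi(\Sigma)=\sum_{p}(1-m_p)=-\sum_{p}(m_p-1),
\]
so the evenness of $\chi(\Sigma)$ is equivalent to the evenness of $\sum_p(m_p-1)$.

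To control this parity I would use the level curves. For a regular value $\bar t$ the set $t^{-1}(\bar t)$ is a finite disjoint union of $n(\bar t)$ circles; the integer $n$ is locally constant away from the finitely many critical values and, crucially, periodic, since traversing $\s^1(r)$ once returns $t^{-1}(\bar t)$ to itself. As $\bar t$ increases through a critical value $c$ the level curves only reconnect (there are no maxima or minima to create or destroy circles), and perturbing $t$ in a small neighbourhood of each $p$ with $t(p)=c$ splits the $2m_p$-pronged singularity into $m_p-1$ ordinary saddles, across each of which the number of circles changes by $\pm1$. Therefore $\Delta n\equiv\sum_{t(p)=c}(m_p-1)\pmod 2$. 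Summing over all critical values and using periodicity, $\sum\Delta n=0$, yields $\sum_p(m_p-1)\equiv0\pmod2$; hence $\chi(\Sigma)$ is even and $\Sigma$ has even genus.

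The main obstacle is precisely this last parity bookkeeping: justifying the congruence $\Delta n\equiv m_p-1\pmod2$ at critical levels (including degenerate ones) and making sure non-orientability never interferes. I would handle it via the local perturbation just described, which is supported near $p$ inside a thin slab about the level $c$ and therefore does not alter the level sets at the regular endpoints; and I would stress that both $n(\bar t)$ and the field $\xi^\top$ are defined without any choice of global normal, so the whole argument carries over verbatim to the non-orientable $\Sigma$. The remaining points---harmonicity of $t$, the local holomorphic model, and the index computation $1-m_p$---are routine.
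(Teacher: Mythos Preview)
Your overall strategy is sound, but there is a genuine gap at the step you yourself flag as the main obstacle. The assertion that crossing an ordinary saddle changes the number of level circles by exactly $\pm 1$ is a fact about Morse functions on \emph{orientable} surfaces; on a non-orientable surface it can fail. When a $1$-handle is attached at an index-$1$ critical point there are three possibilities: two boundary circles merge into one ($\Delta n=-1$), one circle splits into two ($\Delta n=+1$), or the handle is attached with a twist and one circle remains one circle ($\Delta n=0$). The last case is precisely the one that introduces non-orientability, so it \emph{must} occur somewhere if $\Sigma$ is non-orientable. Concretely, the non-orientable genus-$3$ surface $N_3$ (with $\chi=-1$) carries a circle-valued Morse function with no extrema and a single saddle; periodicity then forces $\Delta n=0$ at that saddle, and your bookkeeping would wrongly conclude that $\chi(N_3)$ is even. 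So the remark that ``the whole argument carries over verbatim to the non-orientable $\Sigma$'' is exactly where the proof breaks: the local perturbation into ordinary saddles is fine, but the parity of $\Delta n$ at each saddle is a \emph{global} datum, and non-orientability does interfere.

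The repair is to use the embedding, not just the abstract surface. The level set $t^{-1}(\bar t)$ is not merely a $1$-manifold in $\Sigma$; it is a collection of disjoint embedded circles in the sphere $\s^2\times\{\bar t\}$. Near a saddle $p$ the $2m_p$ arc-ends sit in cyclic order on the boundary of a small disk $D\subset\s^2$, and their continuations form a non-crossing matching in the planar region $\s^2\setminus D$. For $m_p=2$ this immediately excludes the ``linked'' matching that gave $\Delta n=0$; in general one checks that for a non-crossing outer matching the two inner reconnections give circle counts summing to $m_p+1$, hence $\Delta n\equiv m_p-1\pmod 2$ as you need. With this planarity observation your argument goes through. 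The paper's proof reaches the same conclusion by a different route: it passes to the orientable double cover $\Sigma_2\subset\s^2\times\s^1(2r)$ (whose orientability comes from Lemma~\ref{lema:separation-S2xR}), cuts it by a slice $S$ and its translate $F(S)$ into two isometric pieces, and reads off $\chi(\Sigma_2)\equiv 0\pmod 4$ from the fact that each boundary circle appears once in $S$ and once in $F(S)$. Both arguments ultimately exploit that horizontal slices are spheres, but the paper makes the role of the embedding explicit from the start.
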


\begin{proof}
We can lift $\Sigma$ in a natural way to a compact minimal surface $\Sigma_2\subset\s^2\times\s^1(2r)$ so $\Sigma_2$ is a $2$-fold cover of $\Sigma$. We also lift $\Sigma$ to a minimal surface $\widetilde\Sigma\subset\s^2\times\R$ and decompose $(\s^2\times\R)\smallsetminus\widetilde\Sigma=W_1\cup W_2$ as in the proof of Proposition~\ref{lema:separation-S2xS1}. The map $\phi_r\colon \s^2\times\R\to\s^2\times\R$ given by $\phi_r(p,t)=(p,t+2\pi r)$ swaps $W_1$ and $W_2$ so $\phi_{2r}\defeq\phi_r\circ\phi_r$ preserves $W_1$ and $W_2$, which means that $\Sigma_2$ is orientable. We will prove that $\Sigma_2$ has odd orientable genus, from where it follows that $\Sigma$ has even non-orientable genus.

Let us consider the isometric involution $F\colon \s^2\times\s^1(2r)\to\s^2\times\s^1(2r)$ given by $F(p,t)=(p,t+2\pi r)$, and the projection $\pi_2\colon \s^2\times\R\to\s^2\times\s^1(2r)$ such that $\pi_2(\widetilde\Sigma)=\Sigma_2$. It satisfies $F(\Sigma_2)=\Sigma_2$ and swaps $\pi_2(W_1)$ and $\pi_2(W_2)$. Given a horizontal slice $S\subset\s^2\times\s^1(2r)$ intersecting $\Sigma_2$ transversally, the surface $\Sigma_2\smallsetminus(S\cup F(S))$ can be split in two isometric surfaces $\Sigma_2'$ and $F(\Sigma_2')$. Decomposing $\Sigma_2'$ in connected components $P_1,\ldots,P_k$, we can compute $\chi(P_i)=2-2g_i-r_i$, where $g_i$ is the genus and $r_i$ the number of boundary components of $P_i$. Thus
\begin{equation}\label{eqn:genus}
\chi(\Sigma_2)=2\chi(\Sigma_2')=2\sum_{i=1}^k\chi(P_i)=4\left(k-\sum_{i=1}^kg_i\right)-2\sum_{i=1}^kr_i.
\end{equation}
The last term in equation~\eqref{eqn:genus} is a multiple of $4$ since the total number of boundary components $\sum_{i=1}^kr_i$ is even (note that each of such components appears twice, once in $S$ and once in $F(S)$), say $4m=\chi(\Sigma_2)=2(1-g)$. It follows that $g$, the genus of $\Sigma_2$, is odd and we are done.
\end{proof}

\section{Construction of examples}\label{sec:examples}

	Given an isometric minimal immersion $\tilde{\phi}: \Sigma \rightarrow \s^2\times \R$ of a simply connected surface $\Sigma$, Hauswirth, Sa Earp and Toubiana~\cite{HST} constructed an associated isometric minimal immersion $\phi: \Sigma \to \s^2\times \R$ called the \emph{conjugate immersion}. The following properties are well-known and will be used repeatedly in the sequel  (see~\cite{Daniel07} and~\cite[Lemma~1]{MT}):
\begin{enumerate}[(i)]
		\item \label{lm:properties-conjugate:item:angle-function}
		$\phi$ and $\tilde{\phi}$ have the same angle function, i.e., $\nu = \prodesc{N}{\xi} = \prodesc{\tilde{N}}{\xi}$, where $N$ and $\tilde{N}$ are the unit normal vector fields to $\phi$ and $\tilde{\phi}$ respectively.

		\item \label{lm:properties-conjugate:item:tangent-projection}
		The tangential projections of $\xi$ are rotated: $\df \phi^{-1}(T)=J\df\tilde{\phi}^{-1}(\tilde{T})$, where $T=\xi-\nu N$, $\tilde{T}=\xi-\nu \tilde{N}$, and $J$ is the $\tfrac{\pi}{2}$-rotation in $T\Sigma$.

		\item \label{lm:properties-conjugate:item:shape-operator}
		The shape operators $S$ and $\tilde{S}$ of $\phi$ and $\tilde{\phi}$ respectively are related by $S = J\tilde{S}$, where $J$ is the $\tfrac{\pi}{2}$-rotation in $T\Sigma$.

		\item \label{lm:properties-conjugate:item:symmetry-curves}
		Any geodesic curvature line in the initial surface becomes a planar line of symmetry in the conjugate one. More precisely, given a curve $\alpha$ in $\Sigma$, if $\tilde{\phi}\circ\alpha$ is a horizontal (resp.\ vertical) geodesic, then $\phi\circ\alpha$ is contained in a vertical plane (resp.\ slice), which the immersions meets orthogonally.
\end{enumerate}

	\begin{lemma}\label{lm:properties-conjugate}
		Let $\alpha$ be a curve in $\Sigma$ such that $\tilde{\gamma} = \tilde{\phi}\circ\alpha$ is a vertical unit-speed geodesic in $\s^2\times \R$, and express $\tilde{N}_{\tilde{\gamma}(s)} = \cos (\theta(s)) P_{\tilde{\gamma}(s)} + \sin (\theta(s)) Q_{\tilde{\gamma}(s)}$, where $\{P,Q,\tilde{\gamma}'\}$ is an orthonormal frame of parallel vector fields along $\tilde{\gamma}$. Then the conjugate curve $\gamma = \phi\circ\alpha$ lies in a slice $\s^2\times \{t_0\}$ and its geodesic curvature $\kappa$ as a curve of $\s^2\times \{t_0\}$ satisfies $\kappa(s) = \theta'(s)$.
	\end{lemma}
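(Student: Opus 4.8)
The plan is to obtain the first assertion directly from the conjugation dictionary and then to reduce the geodesic curvature to a shape-operator computation. Since $\tilde{\gamma} = \tilde{\phi}\circ\alpha$ is a vertical geodesic, property~(iv) immediately gives that $\gamma = \phi\circ\alpha$ lies in a slice $\s^2\times\{t_0\}$, which $\phi$ meets orthogonally. To set up a frame along the curve, I first note that the unit-speed vertical geodesic $\tilde{\gamma}$ has $\tilde{\gamma}' = \xi$; as $\tilde{\gamma}'$ is tangent to $\tilde{\phi}$ while $\tilde{N}$ is normal, the angle function satisfies $\nu = \langle \tilde{N}, \xi\rangle = \langle \tilde{N}, \tilde{\gamma}'\rangle = 0$ along $\tilde{\gamma}$, and by property~(i) the same holds for $\phi$. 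Hence $N$ is horizontal along $\gamma$. Since $\phi$ is isometric and $\alpha'$ is unit, $\gamma' = \df\phi(\alpha')$ is a horizontal unit vector orthogonal to the horizontal unit normal $N$, so $\{\gamma', N\}$ is an orthonormal basis of $T_{\gamma(s)}(\s^2\times\{t_0\})$. The key point is that $N$ plays a double role: up to sign it is simultaneously the unit normal of $\gamma$ inside the slice and the unit normal of the conjugate surface.

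Next I would compute the geodesic curvature. As slices are totally geodesic in $\s^2\times\R$, the intrinsic and ambient covariant derivatives of $\gamma'$ coincide, so
\[
\kappa = \langle \nabla_{\gamma'}\gamma', N\rangle = \langle S\alpha', \alpha'\rangle,
\]
the last expression being the second fundamental form of the conjugate immersion, with $S$ its shape operator. Invoking property~(iii), namely $S = J\tilde{S}$, together with $J^2 = -\mathrm{Id}$ and $\langle Ju, v\rangle = -\langle u, Jv\rangle$, gives
\[
\kappa = \langle J\tilde{S}\alpha', \alpha'\rangle = -\langle \tilde{S}\alpha', J\alpha'\rangle.
\]
Here I also use that $\tilde{\gamma}$ is an ambient geodesic, so $\langle \tilde{S}\alpha', \alpha'\rangle = \langle \nabla_{\tilde{\gamma}'}\tilde{\gamma}', \tilde{N}\rangle = 0$; equivalently, $\tilde{S}\alpha'$ is proportional to $J\alpha'$.

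It then remains to identify $\langle \tilde{S}\alpha', J\alpha'\rangle$ with $\theta'$. By the Weingarten equation, $\df\tilde{\phi}(\tilde{S}\alpha') = -\nabla_{\tilde{\gamma}'}\tilde{N}$. Since $\{P, Q, \tilde{\gamma}'\}$ is parallel along $\tilde{\gamma}$ and $\tilde{N} = \cos(\theta)\,P + \sin(\theta)\,Q$, differentiating yields $\nabla_{\tilde{\gamma}'}\tilde{N} = \theta'\,(-\sin(\theta)\,P + \cos(\theta)\,Q)$, a vector orthogonal to both $\tilde{N}$ and $\tilde{\gamma}' = \xi$. It is therefore tangent to $\tilde{\phi}$ and orthogonal to $\tilde{\gamma}'$, hence proportional to $\df\tilde{\phi}(J\alpha')$, which spans that one-dimensional space; comparing norms gives $\langle \tilde{S}\alpha', J\alpha'\rangle = \pm\theta'$ and therefore $\kappa = \pm\theta'$.

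The computation is short, so I expect the main obstacle to be the sign bookkeeping. The orientation conventions hidden in the rotation $J$ of properties~(ii)--(iii), the sign of the chosen unit normal $N$ to $\gamma$ within the slice, and the sign convention for the geodesic curvature all have to be tracked at once. I would fix them once by orienting $\Sigma$ (equivalently the sign of $J$ and of $N$) and the curve $\gamma$ compatibly, so that $\{\gamma', N\}$ is positively oriented, and then calibrate the overall sign on a single reference configuration --- such as a strip of a vertical cylinder, where $\tilde{N}$ rotates at a constant rate --- to conclude that $\kappa(s) = +\theta'(s)$ rather than $-\theta'(s)$.
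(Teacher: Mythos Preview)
Your argument is correct and follows essentially the same route as the paper: compute $\nabla_{\tilde{\gamma}'}\tilde N=\theta'(-\sin\theta\,P+\cos\theta\,Q)$, identify this vector with $\pm\df\tilde\phi(J\alpha')$, and then pass through the chain $\langle S\alpha',\alpha'\rangle=\langle J\tilde S\alpha',\alpha'\rangle=-\langle\tilde S\alpha',J\alpha'\rangle$ to reach $\kappa$. The only cosmetic difference is the direction of the computation (you go from $\kappa$ to $\theta'$, the paper from $\theta'$ to $\kappa$) and the sign bookkeeping: rather than calibrating on a test example, the paper simply absorbs the $\pm$ by declaring ``up to a change of orientation in $\Sigma$'' so that $-\sin\theta\,P+\cos\theta\,Q=\df\tilde\phi(J\alpha')$, which is the cleaner way to dispose of it.
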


	\begin{proof}

 The Levi-Civita connection of $\s^2\times \R$ and the slice $\s^2\times \{t_0\}$ where $\gamma$ lies will be denoted by $\overline{\nabla}$ and $\nabla^{\s^2\times \{t_0\}}$, respectively. Since $\tilde{\gamma}$ is a vertical geodesic, the normal vector field $\tilde{N}$ along $\tilde{\gamma}$ is always horizontal. Hence, we can write $\tilde{N}_{\tilde{\gamma}(s)} = \cos (\theta(s)) P_{\tilde{\gamma}(s)} + \sin (\theta(s)) Q_{\tilde{\gamma}(s)}$ for some smooth function $\theta$. Hence
		\[
			\overline{\nabla}_{\tilde{\gamma}'} \tilde{N} = \theta' \bigl( -\sin(\theta) P + \cos(\theta) Q\bigr) = \theta' \tilde\phi_*(J\alpha'),
		\]
		because $P$ and $Q$ are parallel vector fields and $-\sin (\theta) P + \cos(\theta) Q$ is tangent to $\tilde{\phi}(\Sigma)$ (orthogonal to $N$) and orthogonal to $\tilde{\gamma}'$ (both $P$ and $Q$ are horizontal while $\tilde{\gamma}'$ is vertical by assumption) so it is colinear with $\tilde\phi_*(J\alpha')$ and, up to a change of orientation in $\Sigma$ if necessary, we can suppose that they are equal. Finally
		\[
		\begin{split}
			\theta' &= \prodesc{\overline{\nabla}_{\tilde{\gamma}'} \tilde{N}}{\tilde\phi_*(J\alpha')} = -\prodesc{\tilde{S}\alpha'}{J\alpha'} = \prodesc{J\tilde{S}\alpha'}{\alpha'} = \prodesc{S \alpha'}{\alpha'} =\\
&= \prodesc{\overline{\nabla}_{\gamma'} \gamma'}{N} = \prodesc{\nabla^{\s^2\times\{t_0\}}_{\gamma'}\gamma'}{N} = \kappa,
		\end{split}
		\]
		where we have used that $J$ is skew-adjoint and the properties of the conjugation stated above.
	\end{proof}

In this section we are going to apply the \emph{conjugate Plateau technique} in order to construct compact embedded minimal surfaces in $\s^2 \times \s^1(r)$. This technique consists in solving the Plateau problem over a geodesic polygon producing a minimal surface in $\s^2\times \R$, and considering its \emph{conjugate surface}. We must ensure that, after successive reflections over its boundary, we obtain a periodic (in the $\R$ factor) minimal surface, giving rise to a compact one in the quotient $\s^2 \times \s^1(r)$.

\subsection{Initial minimal piece and conjugate surface}
\label{subsec:contours}

Consider a geodesic triangle with angles $\tilde{\alpha}$, $\tilde{\beta}$ and $\gamma$ in $\s^2\times\{0\}$, lift the hinge given by the angle $\gamma$ in vertical direction and add two vertical geodesics (each of length $\tilde{h} > 0$) to produce a closed curve $\tilde{\Gamma}$ in $\s^2 \times \R$ (see Figure~\ref{f:boundary1}). For $\tilde{h}\geq0$, $0<\gamma<\pi$ and $\tilde{\alpha},\tilde{\beta}\leq\frac{\pi}{2}$ (so $\tilde{a}, \tilde{b}\leq \tfrac{\pi}{2}$)  the polygonal Jordan curve $\tilde{\Gamma}$ bounds a minimal graph $\tilde{M}$ over $\s^2 \times \{0\}$ (the existence follows from Radó's theorem, see~\cite{Rosenberg} for a particular example). Hence the angle function of $\tilde{M}$ does not vanish and we can suppose, up to a change of the normal to $\tilde{M}$, that $\tilde{\nu} > 0$. Its conjugate surface $M$ is a minimal surface also in $\s^2\times\R$, bounded by a closed Jordan curve $\Gamma$ consisting of three symmetry curves in vertical planes and two horizontal symmetry curves. Two successive curves in vertical planes enclose an angle $\gamma$ (this angle is intrinsic to the surface), the remaining angles in the vertexes of $\Gamma$ are equal to $\frac{\pi}{2}$.

	We will denote by $\tilde{\imath}$ (resp.\ $i$), $i \in \{1, 2, 3, 4, 5\}$, each of the vertexes of $\tilde{\Gamma}$ (resp.\ $\Gamma$). Moreover, we will denote $\widetilde{\imath\jmath}$ (resp.\ $\overline{\imath\jmath}$) the segment of $\tilde{\Gamma}$ (resp.\ $\Gamma$) that joints the points $\tilde{\imath}$ and $\tilde{\jmath}$ (resp.\ $i$ and $j$). Furthermore, $\Pi_{ij}$ will stand for the vertical plane or horizontal slice where $\overline{\imath\jmath}$ lies in the conjugate piece. Up to a translation in $\s^2\times \R$, we can suppose that the slice $\Pi_{34}$ is $\s^2\times \{0\}$.

	\begin{lemma}\label{lm:basic-properties-conjugate-polygon}
		In the previous setting, let $\tilde{h} \leq \tfrac{\pi}{2}$. Then: 
		\begin{enumerate}[(i)]
			\item The segments of $\Gamma$ are embedded curves. Moreover, each of the segment of $\Gamma$ projects injectively to $\s^2\times\{0\} = \Pi_{34}$.

			\item The vertical planes $\Pi_{23}$, $\Pi_{45}$ and $\Pi_{51}$ and the slices $\Pi_{12}$ and $\Pi_{34}$ determine a prism $\Omega$ defined by the angles $\alpha$ (between $\Pi_{51}$ and $\Pi_{23}$), $\beta$ (between $\Pi_{23}$ and $\Pi_{45}$), $\gamma$, and the height $h$ (see Figure~\ref{f:boundary2}). Moreover, $\alpha > \tilde{\alpha}$ and $\beta > \tilde{\beta}$.

			\item The polygon $\Gamma$ is contained in $\Omega$ and so $\Gamma$ projects injectively to $\s^2\times \{0\}$.

			\item The minimal surface $M$ is embedded and lies in the interior of $\Omega$.
		\end{enumerate}
	\end{lemma}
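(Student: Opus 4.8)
The plan is to analyze the five boundary arcs $\overline{12},\dots,\overline{51}$ of the conjugate polygon $\Gamma$ one at a time, using the conjugation properties~\ref{lm:properties-conjugate:item:angle-function}--\ref{lm:properties-conjugate:item:symmetry-curves} together with Lemma~\ref{lm:properties-conjugate}, and then to assemble the global picture. Recall that $\widetilde{23},\widetilde{45},\widetilde{51}$ are the horizontal geodesic sides of the (lifted) triangle while $\widetilde{12},\widetilde{34}$ are the two vertical geodesics, so by property~\ref{lm:properties-conjugate:item:symmetry-curves} the conjugate arcs $\overline{23},\overline{45},\overline{51}$ lie in vertical planes $\Pi_{23},\Pi_{45},\Pi_{51}$ and $\overline{12},\overline{34}$ lie in slices $\Pi_{12},\Pi_{34}$, each met orthogonally. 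The two local computations I would carry out first are: (a) along the conjugate of a horizontal geodesic, writing $\xi=T+\nu N$ and invoking property~\ref{lm:properties-conjugate:item:tangent-projection}, the tangent $\gamma'$ satisfies $\prodesc{\gamma'}{\xi}=\pm\sqrt{1-\nu^2}$ while its horizontal part has length $\nu$; (b) along the conjugate of a vertical geodesic, Lemma~\ref{lm:properties-conjugate} gives that the geodesic curvature of $\overline{12}$ (resp.\ $\overline{34}$) in its slice equals $\theta'$, the rotation speed of the horizontal normal of $\tilde{M}$ along that vertical edge.

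For part~(i), the three arcs in vertical planes are handled by~(a): since $\tilde{\nu}>0$ on the open horizontal edges, the horizontal part of $\gamma'$ never vanishes and keeps constant sign along each arc, so the projection $\pi\circ\overline{\imath\jmath}$ traverses the geodesic $\Gamma_{ij}=\pi(\Pi_{ij})$ strictly monotonically; this yields both embeddedness and injective projection, and the projected length $\int\nu\df s<\tilde{a}\leq\tfrac\pi2$ keeps each arc inside an embedded half great circle. For the two arcs in slices I would use~(b) to show that $\theta'$ has constant sign (so $\overline{12},\overline{34}$ are locally convex) and that the total rotation $\Delta\theta$ stays below $\pi$; combined with the length bound $\tilde{h}\leq\tfrac\pi2$, this forces each of $\overline{12},\overline{34}$ to be an embedded convex arc and hence to project injectively. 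Controlling $\Delta\theta$ is the first delicate point, and it is precisely where the hypothesis $\tilde{h}\leq\tfrac\pi2$ enters.

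For part~(ii), each vertical plane meets the base slice $\Pi_{34}=\s^2\times\{0\}$ in a geodesic $\Gamma_{23},\Gamma_{45},\Gamma_{51}$ of $\s^2$; I would verify that these three geodesics meet in the expected cyclic pattern and bound a nondegenerate spherical triangle, so that together with $\Pi_{34}$ and $\Pi_{12}=\s^2\times\{h\}$, where $h=\int_{\overline{23}}\sqrt{1-\nu^2}\df s$, they enclose the triangular prism $\Omega$; the angles $\gamma$, $\alpha$ and $\beta$ are then read off from these intersections. The inequalities $\alpha>\tilde{\alpha}$ and $\beta>\tilde{\beta}$ are the heart of the lemma: I would obtain them by a Gauss--Bonnet computation on the region cut in the slice by $\overline{12}$ (resp.\ $\overline{34}$) and the two geodesics it meets orthogonally, which expresses $\alpha$ as the original angle plus the strictly positive contributions of the enclosed area and of $\Delta\theta$; comparing the resulting prism triangle with the original one (they share the angle $\gamma$) then gives the strict inequalities. \textbf{This angle comparison, together with the a priori control of $\Delta\theta$ from part~(i), is the main obstacle}, since it forces one to track signs carefully and to use $\tilde{\nu}>0$, $\tilde{h}\leq\tfrac\pi2$ and $\tilde{a},\tilde{b}\leq\tfrac\pi2$ simultaneously.

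Finally, parts~(iii) and~(iv) follow once the prism is in place. For~(iii) I would use the monotonicity from~(i) to confine each arc between the heights $0$ and $h$ and inside the corresponding face of $\Omega$, so that $\Gamma\subset\Omega$; assembling the injectively projecting pieces around the boundary of the spherical triangle then shows that all of $\Gamma$ projects injectively to $\s^2\times\{0\}$. For~(iv) I would first confine $M$ to $\overline{\Omega}$ by the maximum principle: the faces of $\Omega$ are totally geodesic (the three vertical planes and the two slices), so a minimal surface with boundary on $\partial\Omega$ cannot touch a face from the inside without coinciding with it, whence $M\subset\overline{\Omega}$ with height between $0$ and $h$. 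Embeddedness then comes from $\nu>0$, which makes $\pi|_M$ a local diffeomorphism onto $\s^2$; since $\partial M=\Gamma$ projects injectively onto the boundary of the spherical triangle, a degree argument upgrades this to a diffeomorphism onto the triangle. Thus $M$ is a vertical graph over the triangle, hence embedded and, away from its boundary, contained in the interior of $\Omega$.
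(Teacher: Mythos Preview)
Your approach is essentially the paper's: monotone horizontal projection (via $\nu>0$) for the vertical-plane arcs, Lemma~\ref{lm:properties-conjugate} plus a Gauss--Bonnet/isoperimetric bound for the slice arcs, Gauss--Bonnet on the domain $V_\alpha$ for $\alpha>\tilde\alpha$, and the maximum principle for~(iv). Two points deserve tightening.

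First, a logical ordering issue. Your Gauss--Bonnet argument for $\alpha>\tilde\alpha$ in part~(ii) is applied to ``the region cut in the slice by $\overline{12}$ and the two geodesics it meets orthogonally''; but this region is only well defined once you know $\overline{12}$ lies entirely on one side of those geodesics, i.e., once you know which of the eight candidate prisms is $\Omega$ and that $\overline{12}$ sits in one of its faces. The paper, for exactly this reason, postpones the inequality $\alpha=\operatorname{area}(V_\alpha)+\tilde\alpha>\tilde\alpha$ until \emph{after} (iii) has been established.

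Second, your treatment of~(iii) is too thin. The three vertical planes cut $\s^2\times\{0\}$ into eight triangles, and ``monotonicity from~(i)'' alone does not tell you that $\overline{12}$ and $\overline{34}$ project into the \emph{same} one. The paper handles this with three separate claims: (1) the endpoints $1,2$ (resp.\ $3,4$) lie in the same triangle because the geodesic curvature of $\overline{12}$ (resp.\ $\overline{34}$) does not change sign; (2) the whole arc stays in that triangle, using that $\nu\neq 0$ forces the projection of $M$ to be an immersed domain; and (3) the two arcs lie over the same triangle, because otherwise $M$ would meet the vertical edge $\Pi_{51}\cap\Pi_{23}$ twice, producing an interior point with $\nu=0$. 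You should supply an argument of this type; your appeal to ``assembling the injectively projecting pieces around the boundary of the spherical triangle'' presupposes the conclusion. Once (iii) is in place, your graph/degree argument for~(iv) is fine and is a slightly more explicit version of the paper's one-line maximum-principle remark.
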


	Thanks to Lemma~\ref{lm:basic-properties-conjugate-polygon} we are able to draw more precisely what the polygon $\Gamma$ looks like (see Figure~\ref{f:boundary2}) provided $\tilde{h}$ is short enough, i.e., $\tilde{h}\leq \tfrac{\pi}{2}$.

\begin{figure}[htbp]
\begin{minipage}[b]{.5\textwidth}
  \begin{center}
  \includegraphics{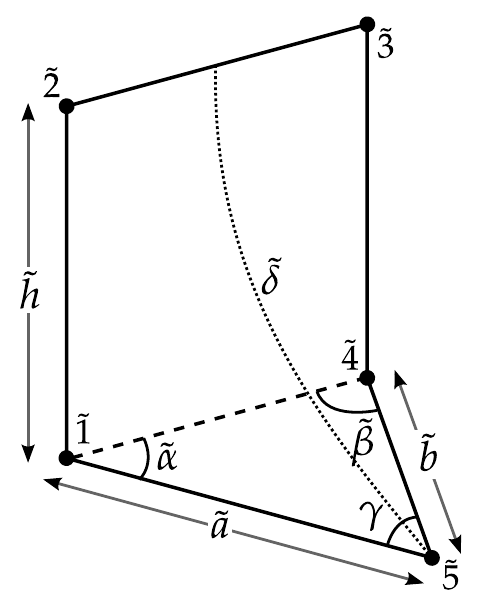}
  \end{center}
  \caption{\small{The boundary curve $\tilde{\Gamma}$. If $\tilde{a} = \tilde{b}$, then $\tilde{\Gamma}$ and $\tilde{M}$ are symmetric with respect to a vertical plane containing $\tilde{\delta}$.}}
  \label{f:boundary1}
\end{minipage}
\begin{minipage}[b]{.49\textwidth}
  \begin{center}
  \includegraphics{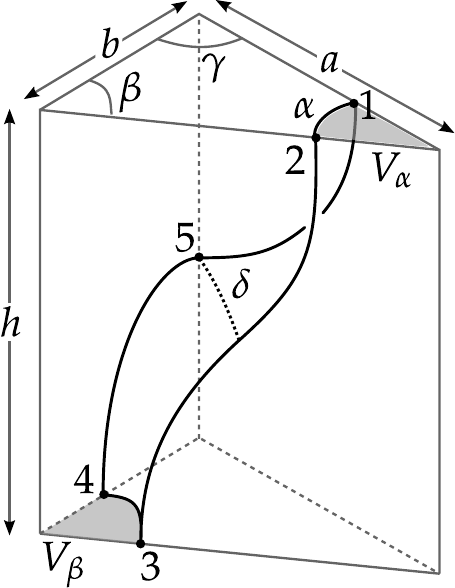}
  \end{center}
  \caption{\small{The boundary curve $\Gamma$ of the conjugate minimal surface $M$ inside the prism $\Omega$ with data $(\alpha, \beta, \gamma, h)$.}}\label{f:boundary2}
\end{minipage}
\end{figure}
	\begin{proof}
		(i) The angle function $\tilde{\nu}$ does not vanish in the interior of the horizontal segments of $\tilde{\Gamma}$ because this would contradict the boundary maximum principle for minimal surfaces ($\tilde\Sigma$ would be tangent to a vertical plane containing this horizontal segment). Hence the angle function $\nu$ of the conjugate piece (which is the same as $\tilde{\nu}$) does not vanish in the interior points of the curves contained in vertical planes. Let us focus on the segment $\overline{23}$ parametrized by a curve $\sigma:[a, b] \rightarrow \s^2\times \R$ with unit speed. The length of the projection $\pi(\overline{23})$ between the point $2 = \sigma(a)$ and $\sigma(t)$ is given by $\ell(t) = \int_a^t \nu(\sigma(s)) \df s$. Hence $\ell(t)$ is strictly increasing so the curve $\overline{23}$ projects injectively to $\s^2\times \{0\}$. In particular, it is embedded. The same argument works for $\overline{45}$ and $\overline{51}$.

The curves $\overline{12}$ and $\overline{34}$ are also embedded. To prove it, we realize that Lemma~\ref{lm:properties-conjugate} implies that the geodesic curvature of both curves does not change sign because the rotation of the normal vector field $\tilde{N}$ along $\widetilde{12}$ and $\widetilde{34}$ is monotonic (otherwise a contradiction with the boundary maximum principle would be found). Let us suppose that the curve $\overline{12}$ is not embedded. Then the curve contains a loop enclosing a domain $D$. By the Gau\ss-Bonnet theorem, 
		\[
		\mathrm{area}(D)\geq \pi - \int_{\text{loop}} \kappa \geq \pi - \abs{\int_{\text{loop}} \kappa }\geq \pi - \tilde{\alpha} \geq \frac{\pi}{2}, \quad \text{since } \tilde{\alpha} \in\ ]0,\tfrac{\pi}{2}], 
		\]
		where $\kappa$ is the geodesic curvature of $\overline{12}$ in $\Pi_{12}$ and we have taken into account that total curvature of $\overline{12}$ is exactly $\tilde{\alpha}$ (see Lemma~\ref{lm:properties-conjugate}). Hence, by the isoperimetric inequality, $\tilde{h} = \ell(\overline{12}) \geq \ell(\mathrm{loop}) \geq (\tfrac{7}{4}\pi^2)^{1/2} > \tfrac{\pi}{2}$, contradicting the assumption $\tilde{h}<\frac{\pi}{2}$. A similar reasoning implies that the curve $\overline{34}$ is embedded for $\tilde{h} < \tfrac{\pi}{2}$.

		(ii) First, the slices $\Pi_{12}$ and $\Pi_{34}$ do not coincide. Otherwise, we get a contradiction to the maximum principle by considering a slice $\s^2\times\{t\}$ for $|t|$ large enough so $(\s^2\times \{t\})\cap M = \emptyset$. Move such slice downwards or upwards until a first contact point appears. That point cannot be at the interior so it has to be either in $\overline{45}$, $\overline{51}$ or $\overline{23}$, and it would yield a contradiction to the boundary maximum principle). Let $h$ be the distance between $\Pi_{12}$ and $\Pi_{34}$ so $\Pi_{12} = \s^2\times \{h\}$. 

Note also that $\Pi_{45}$ and $\Pi_{51}$ are different since they form an angle of $\gamma \not\in\{0,\pi\}$. Finally, we will prove that $\Pi_{23}$ is different from $\Pi_{45}$ (resp.\ $\Pi_{51}$). Were it not the case, the endpoints of the horizontal curve $\sigma = \overline{34}$ (resp.\ $\sigma = \overline{12}$) would lie in a geodesic of $\s^2\times\{0\}$ (resp.\ $\s^2\times\{h\}$) meeting it orthogonally. Hence, the curve $\sigma$ and the segment of geodesic that joints the points $3$ and $4$ (resp.\ $1$ and $2$) would define a domain $D$ in $\s^2\times\{0\}$ (resp.\ $\s^2\times \{h\}$). Gau\ss-Bonnet formula in $D$ yields
		\begin{equation}\label{eqn:geod}
			\int_{\sigma} \kappa = \pi - \mathrm{area}(D),
		\end{equation}
		where $\kappa$ is the geodesic curvature of $\sigma$ in the slice.  The total rotation $\Theta$ of the normal vector field to $\tilde{M}$ along $\tilde{\sigma} = \widetilde{34}$ (resp. $\tilde{\sigma} = \widetilde{12}$) is such that $|\Theta|= \tilde{\beta}$ (resp.\ $|\Theta| = \tilde{\alpha}$), so $|\Theta|\leq\frac{\pi}{2}$. By Lemma~\ref{lm:properties-conjugate} and equation~\eqref{eqn:geod}, it follows that $\tfrac{\pi}{2}\leq\mathrm{area}(D) <\frac{3\pi}{2}$. Again the isoperimetric inequality in the sphere would imply that $4\tilde{h}^2 \geq \mathrm{area}(D)(4\pi - \mathrm{area}(D)) > \tfrac{7}{4}\pi^2$, contradicting that $\tilde h<\frac{\pi}{2}$.

		The last assertion of (ii) will be showed at the end of the proof.

		(iii) First, the height of the point $5$ is strictly between $0$ and $h$. In other case, for instance if the height of the point $5$ is bigger than or equal to $h$, we consider a slice $S = \s^2 \times \{t_0\}$ with $t_0$ sufficiently large so $M \cap S = \emptyset$. Then, we move $S$ downwards until it first touches $M$ at a point $p$. Then $p$ should be either an interior point of $M$, or at the interior of $\overline{45}$, $\overline{51}$ or $\overline{23}$, or $p = 5$. Anyway we will find a contradiction to the maximum principle at the interior or at the boundary. Likewise the curves $\overline{45}$, $\overline{51}$ and $\overline{23}$ are also contained in $\s^2\times[0,h]$. 

		Since the vertical planes $\Pi_{23}$, $\Pi_{45}$ and $\Pi_{51}$ are pairwise different (see (ii)), they divide the slice $\s^2\times \{0\}$ in eight triangles. Hence the vertical planes $\Pi_{23}$, $\Pi_{45}$ and $\Pi_{51}$ and the slices $\Pi_{12}$ and $\Pi_{34}$ define eight different prism. Since we have shown in (i) that the curves contained in vertical planes project one-to-one, (iii) will follow from the fact that $\overline{12}$ and $\overline{34}$ lie in the same prism, that we will call $\Omega$.

		\noindent\textbf{Claim 1}: \emph{The points $1$ and $2$ (resp.\ $3$ and $4$) lie in the same triangle.} 

Otherwise the geodesic curvature of $\overline{12}$ (resp.\ $\overline{34}$) in $\s^2\times \{h\}$ (resp.\ $\s^2\times \{0\}$) will change sign but that is impossible (see the proof of (i)).

		\noindent\textbf{Claim 2}: \emph{The curve $\overline{12}$ (resp.\ $\overline{34}$) is completely contained in a triangle.}

This claim follows from the fact that the geodesic curvature of $\overline{12}$ (resp.\ $\overline{34}$) does not change sign, as well as the fact the angle function of $\Sigma$ does not vanish. This means that the projection of $\Sigma$ to the $\s^2$-factor is an immersed domain $G\subset\s^2$, so we can define a unit normal to $\Sigma$ along $\overline{12}$ (resp.\ $\overline{34}$) that project to a normal to $G$ pointing towards the interior of $G$. If $\overline{12}$ or $\overline{34}$ are not contained in a triangle, it is easy to see that such an immersed domain $G$ cannot exist, which is a contradiction.

		\noindent\textbf{Claim 3}: \emph{The curves $\overline{12}$ and $\overline{34}$ lie in the same prism.} 

Let us assume that $\overline{12}$ and $\overline{34}$ do not lie in the same prism and suppose, without loss of generality, that $\overline{12}$ does not lie in the same prism as the point $5$. In that case the surface $M$ will contain two different points in the same vertical geodesic $\Pi_{51}\cap \Pi_{23}$. Hence there is one point $p$ in the interior of $M$ such that $\nu(p) = 0$ which is impossible (recall that $\nu = \tilde{\nu} > 0$, see the beginning of Section~\ref{subsec:contours}, and that $M$ is orthogonal to $\Pi_{51}$ and $\Pi_{23}$).

	Once we know that $\Gamma$ is contained in the boundary of a well-defined prism $\Omega$ (see (ii)), (i) ensures that the whole polygon $\Gamma$ projects injectively to $\s^2\times\{0\}$.

		(iv) This assertion follows from (iii) by a classical application of the maximum principle.

		It remains to prove that $\alpha > \tilde{\alpha}$ and $\beta > \tilde{\beta}$. First, let us apply the Gau\ss{}-Bonnet-theorem to the domain $V_\alpha$ bounded by $\overline{12}$ and the edges of $\Omega$ as in Figure~\ref{f:boundary2} (it is well defined thanks to (i) and (iii)). We get $\alpha=\vol(V_\alpha)+\tilde{\alpha}>\tilde{\alpha}$. Likewise, $\beta>\tilde\beta$ by using the corresponding domain $V_\beta$. 
	\end{proof}

Our aim is to define $\tilde{\Gamma}$ such that the conjugate surface has the desired properties, i.e., it can be smoothly extended without branch points by Schwarz reflection about its boundary producing a compact embedded surface in $\s^2\times\R$ after a finite number of reflections. Depending on the surface we want to construct we have to ensure different data $(\alpha,\beta,\gamma, h)$ for the prism $\Omega$ (see Lemma~\ref{lm:basic-properties-conjugate-polygon}). We want to remark that if $m$ copies of $M$ are needed to produce a compact orientable minimal surface $\Sigma$ by Schwarz reflection, then the genus $g$ of $\Sigma$ is $g = 1 + \frac{m}{4\pi}(\pi - \gamma)$. This can be worked out by the Gau\ss-Bonnet theorem: The total curvature of $M$ is $\gamma - \pi$ and in order to close the surface we need by assumption $m$ copies of $M$. Hence the total curvature of $\Sigma$ is $m(\gamma - \pi)$ and so the genus is $g = 1 + \frac{m}{4\pi}(\pi - \gamma)$.

\begin{remark}
One could consider the same boundary curve $\tilde{\Gamma}$ consisting of horizontal and vertical geodesics in $\R^3$. This leads to the well-known Schwarz P-surface, whose name is motivated by the fact that it is invariant under a primitive cubic lattice $\Lambda$. Moreover, there exists another lattice under which the surface is preserved but its orientation is not; $\Lambda$ is a subset of it. The quotient $P$ of the surface under the lattice $\Lambda$ has genus $3$ and consists of 16 copies of $M$ with $\alpha=\beta=\frac{\pi}{4}$ and $\gamma=\frac{\pi}{2}$. If we set $a=b=h$ (cp.\ Figure \ref{f:boundary2}), the quotient $P$ is contained in a cube of edge length $2a$, and these cubes tessellate $\R^3$.
\end{remark}

\subsection{Odd genus Schwarz P-type examples.}\label{subsec:odd-genus}

This section is devoted to investigate the most symmetric case in the construction above, which follows from choosing $\tilde a=\tilde b$ in the initial contour. Since the solution of the Plateau problem with boundary $\tilde\Gamma$ is unique and in this particular case $\tilde\Gamma$ is symmetric about a vertical plane, we get that the initial minimal surface $\tilde M$ is also symmetric about the same vertical plane, intersecting $\tilde M$ orthogonally along a curve $\tilde\delta$ (see Figure~\ref{f:boundary1}). From~\cite[\S2]{MT}, this implies that the conjugate surface $M$ is symmetric about a horizontal geodesic $\delta$ lying in the interior of $M$ (see Figure \ref{f:boundary2}). In particular, $a=b$ and $\alpha=\beta$.

\begin{figure}[h]
\begin{minipage}[b]{.5\textwidth}
\centering
\includegraphics{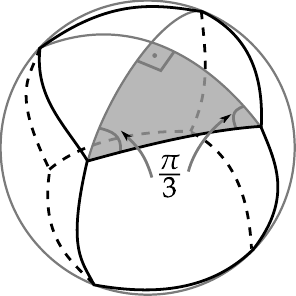}
\caption{\small{Regular tiling of $\s^2$ by quadrilaterals}}\label{f:cube-tessellation}
\end{minipage}
\begin{minipage}[b]{.49\textwidth}
  \centering
  \includegraphics{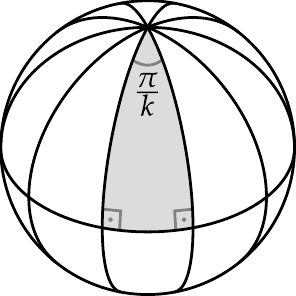}
  \caption{\small{Tessellation of the sphere by isosceles triangles}}\label{f:balloon-tessellation}
\end{minipage}
\end{figure}

We are going to produce new compact orientable embedded minimal examples in $\s^2 \times \s^1(r)$, for sufficiently small radius $r > 0$, coming from two different tessellations of the sphere by isosceles triangles (see the shaded triangles in Figures~\ref{f:cube-tessellation} and \ref{f:balloon-tessellation}). The first one comes from the regular quadrilateral tiling of the sphere once we decompose each square in four isosceles triangles with angles $\alpha = \beta = \frac{\pi}{3}$ and $\gamma = \frac{\pi}{2}$. The second one, as depicted in Figure~\ref{f:balloon-tessellation}, follows from decomposing $\s^2$ in $4k$ isosceles triangles with angles $\alpha = \beta = \frac{\pi}{2}$ and $\gamma = \frac{\pi}{k}$, for any $k\geq 1$. Let us assume that we can produce, via conjugate Plateau technique, an embedded minimal surface $M_j$ that fits inside a prism $\Omega_j$ with data $(\frac{\pi}{3}, \frac{\pi}{3},\frac{\pi}{2}, h)$ if $j = 1$ and $(\frac{\pi}{2}, \frac{\pi}{2}, \frac{\pi}{k}, h)$ if $j = 2$ for arbitrary small $h$ (see Figure~\ref{f:boundary2}). Then:

\begin{enumerate}[$\bullet$]
  \item It is clear that 24 congruent copies of $\Omega_1$ tessellate $\s^2\times [0,h]$. Since the tessellation comes from reflections about vertical planes, it implies a smooth continuation of $M_1$, which we call $M_1'$. The surface $M_1'$ is connected, topologically it is $\s^2\setminus\{p_1,\dots,p_8\}$, where each $p_i$ corresponds to one vertex of the quadrilateral tiling. After reflecting $M_1'$ about one of the horizontal planes $\s^2\times \{0\}$ or $\s^2\times \{h\}$ we get a surface consisting of 48 copies of $M_1$. The vertical translation $T$ about $2h$ yields a simply periodic embedded minimal surface $S$. It follows that the quotient $S/T$ is a compact minimal surface with genus $7$ in $\s^2 \times \s^1(\frac{h}{\pi})$.

  \item We can reflect the surface $M_2$ about the vertical planes containing the edges $a$ and $b$. Repeating this reflection successively the surface closes up in such a way that $2k$ copies of $M_2$ build a smooth surface in the product of an hemisphere of $\s^2$ and an interval $I$ of length $h$. The surface meets the vertical plane above the bounding great circle orthogonally and a reflection about this plane yields a minimal surface $M_2'$ which is topologically $\s^2\setminus\{p_1,\dots,p_{2k}\}$, where each $p_j$ corresponds to one vertex of the tiling. As before reflecting $M_2'$ about one of the horizontal planes $\s^2\times \partial I$ we get a surface $S$ which is invariant under the vertical translation $T$ of length $2h$. Moreover it is invariant under a rotation about the fiber over the north or south pole by an angle of $\frac{\pi}{k}$. The quotient $S/T$ is compact and consists of $8k$ copies of $M$, so its genus is given by $g=2k-1$, i.e., an arbitrary odd positive integer.
\end{enumerate}

Now, we are going to show the existence of the embedded minimal surfaces $M_1$ (resp.\ $M_2$) associated to $\Omega_1$ (resp.\ $\Omega_2$). Let $\gamma = \frac{\pi}{2}$ (resp.\ $\gamma = \frac{\pi}{k}$).
Recall that we have fixed $\tilde{a} = \tilde{b}$. As mentioned at the beginning of the section, the conjugate piece contains a horizontal geodesic $\delta$ (so $a = b$ and $\alpha = \beta$, see Figures~\ref{f:boundary1} and~\ref{f:boundary2}). Therefore, $\delta$ and its projection onto the base of $\Omega$ have the same length $\ell(\pi(\delta)) = \ell(\delta) = \ell(\tilde{\delta})$, and we can decompose the base of $\Omega$ in two right triangles as in Figure~\ref{fig:base-triangle-Omega}. By spherical trigonometry, we get that 
\begin{equation}\label{eq:relation-alpha-delta}
\cos(\alpha) = \sin(\tfrac{\gamma}{2}) \cos(\ell(\delta)).
\end{equation}

\begin{figure}[htbp]
\centering
\includegraphics{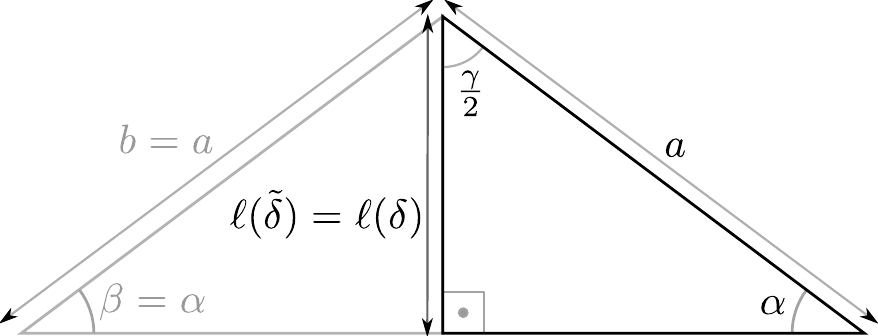}
\caption{Base triangle of the prism $\Omega$ in the special case $\tilde{a} = \tilde{b}$.}
\label{fig:base-triangle-Omega}
\end{figure}

We claim that \emph{for each $\tilde{a} \in\ ]0, \tfrac{\pi}{2}]$ there exists $\tilde{h}(\tilde{a}) \leq \tfrac{\pi}{2}$ such that $\alpha = \tfrac{\pi}{3}$ (resp.\ $\alpha = \tfrac{\pi}{2}$)}. Let us check the claim for $M_1$, i.e., in the case $\gamma = \tfrac{\pi}{2}$ and $\alpha = \tfrac{\pi}{3}$. Fixing $\tilde{a}\in\ ]0, \tfrac{\pi}{2}]$, the length of $\tilde{\delta}$ varies continuously in $\tilde{h}$ from zero (when $\tilde{h} \rightarrow 0$) to $+\infty$ (when $\tilde{h} \rightarrow +\infty$). Moreover, $\tilde{h} < \ell(\tilde{\delta})$ by construction. Since $\gamma = \tfrac{\pi}{2}$ and $\alpha = \tfrac{\pi}{3}$ then, in view of equation~\eqref{eq:relation-alpha-delta}, $\ell(\tilde\delta) = \ell(\delta) = \arccos(\sqrt{2}\cos(\tfrac{\pi}{3}))< \tfrac{\pi}{2}$ (in the case $\gamma=\tfrac{\pi}{k}$ and $\alpha = \tfrac{\pi}{2}$, we have that $\ell(\delta) = \tfrac{\pi}{2}$). Hence the height in the initial polygon satisfies $\tilde{h} \leq \ell(\tilde{\delta}) \leq \tfrac{\pi}{2}$ and the assumption of Lemma~\ref{lm:basic-properties-conjugate-polygon} is fulfilled. Finally, varying $\tilde{h} \in\ ]0, \ell(\tilde{\delta})[$ we can find $\tilde{h}(\tilde{a})$ such that $\alpha = \tfrac{\pi}{3}$ and $\tilde{h}(\tilde{a}) \leq \tfrac{\pi}{2}$. Lemma~\ref{lm:basic-properties-conjugate-polygon} applies and the claim is proved.

Since $h \leq \ell(\overline{23}) = \ell(\widetilde{23}) = 2 \arctan\bigl(\sin(\tilde{a})( {\cot^2(\tfrac{\gamma}{2}) + \cos^2(\tilde{a})})^{-1/2}\bigr)$, we can make $h$ arbitrarily small by choosing $\tilde{a}$ small enough.

To sum up, we can always choose a polygon $\tilde{\Gamma}$ with data $(\tilde{\alpha}, \tilde{\alpha}, \frac{\pi}{2}, \tilde{h})$ (resp.\ $(\tilde{\alpha}, \tilde{\alpha}, \frac{\pi}{k}, \tilde{h})$) satisfying the hypothesis of Lemma~\ref{lm:basic-properties-conjugate-polygon}. Thus the conjugate of the Plateau solution of $\tilde{\Gamma}$ is embedded and its boundary is contained in the prism $\Omega_1$ (resp.\ $\Omega_2$), for sufficiently small height $h$.

\subsection{Arbitrary genus Schwarz P-type examples.}\label{subsec:arbitrary-genus}
To construct a compact orient\-able minimal surface $M_k$ with arbitrary genus, let us guarantee the existence of a minimal surface $M$ that matches a prism $\Omega$ with data $(\frac\pi k,\frac\pi 2,\frac\pi 2,h)$ for some $h>0$. This leads to less symmetric examples since $\alpha\ne\beta$ and we have to use a degree argument as in \cite{KPS88} to guarantee existence. If $M\subset\Omega$ exists, then Schwarz reflection about the vertical mirror planes continues the surface smoothly and $4k$ copies of $M$ build a minimal surface $M'$ in the product $\s^2\times ]0,h[$ (see Figure~\ref{f:arbitrary-genus}). Topologically $M'$ is $\s^2\setminus\{p_1,\dots,p_{k+2}\}$, where $p_j$ are the vertexes of the tiling. As in the symmetric cases above, reflecting $M'$ about the bounding horizontal mirror planes gives a complete surface $M''$, which is invariant under vertical translation about $2h$. The quotient is the desired surface $M_k$. Moreover, $M''$ is also invariant under rotation about those vertical fibers which are intersection of vertical mirror planes by angles $\frac{2\pi}{k}$ (resp.\ $\pi$). As in the previous cases one computes the genus and gets $1+k$, since $8k$ copies of $M$ build the compact surface $M_k$.

\begin{figure}[htbp]
\centering
\includegraphics[height=4.5cm]{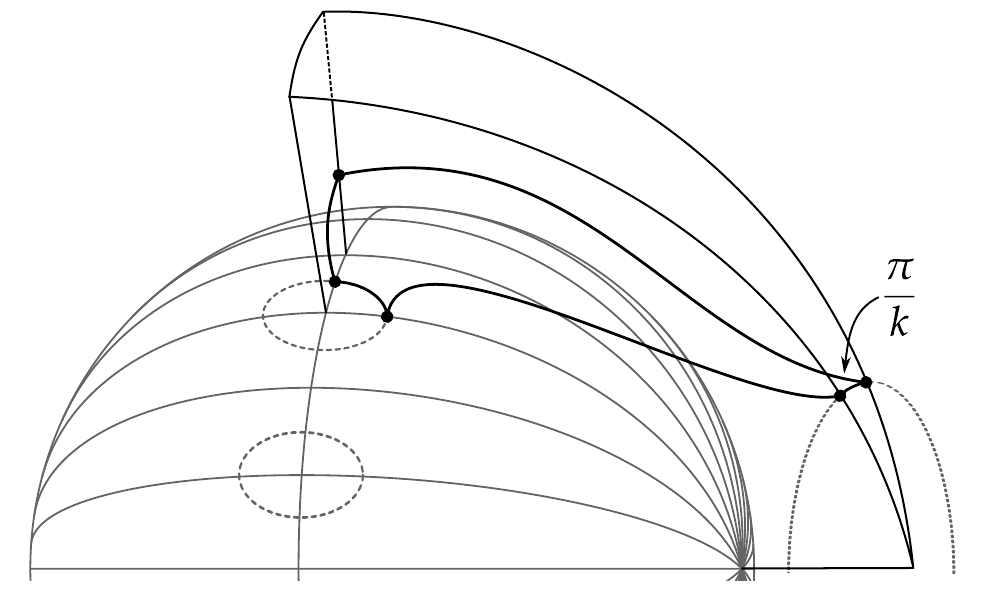}
\caption{\small{The polygon $\Gamma$ for the configuration $(\frac\pi k,\frac\pi 2,\frac\pi 2,h)$. The resulting compact surface $M_k$ has genus $k+1$.
}}
\label{f:arbitrary-genus}
\end{figure}

\begin{proposition}\label{prop:orientable-arbitrary-genus-examples}
There exists a compact orientable minimal surface $M_k$ with genus $k+1$, embedded in $\s^2\times\s^1(r)$, for any $k\geq3$ and $r$ small enough.
\end{proposition}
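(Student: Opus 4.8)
The construction of $M_k$ out of a single piece $M$---the Schwarz reflections across the vertical and horizontal mirror planes, the topology $\s^2\setminus\{p_1,\dots,p_{k+2}\}$ of $M'$, and the genus count---has already been laid out before the statement, so the only thing left to establish is the existence assumption made there: for some small $h>0$ there is an embedded minimal piece $M$ whose conjugate boundary $\Gamma$ fits the prism $\Omega$ with data $(\frac\pi k,\frac\pi2,\frac\pi2,h)$. The plan is to produce $M$ by the conjugate Plateau method of Section~\ref{subsec:contours}: fix $\gamma=\frac\pi2$ and start from the geodesic contour $\tilde\Gamma$ with data $(\tilde\alpha,\tilde\beta,\gamma,\tilde h)$, where $0<\tilde\alpha,\tilde\beta\le\frac\pi2$ and $0<\tilde h\le\frac\pi2$. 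By Radó's theorem $\tilde\Gamma$ bounds a minimal graph $\tilde M$ with $\tilde\nu>0$, and Lemma~\ref{lm:basic-properties-conjugate-polygon} shows that its conjugate $M$ is embedded and sits inside a prism with data $(\alpha,\beta,\gamma,h)$ where $\alpha>\tilde\alpha$ and $\beta>\tilde\beta$. It therefore suffices to tune the initial data so that $\alpha=\frac\pi k$ and $\beta=\frac\pi2$ while keeping $h$ small.

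The real content is the surjectivity of the angle map $\Phi_{\tilde h}\colon(\tilde\alpha,\tilde\beta)\mapsto(\alpha,\beta)$, defined on $]0,\frac\pi2]\times]0,\frac\pi2]$ for each fixed small $\tilde h$. Because $\alpha\ne\beta$ here, the explicit spherical trigonometry of the symmetric case (equation~\eqref{eq:relation-alpha-delta}) is unavailable, so I would instead argue, as in~\cite{KPS88}, that the target $(\frac\pi k,\frac\pi2)$ lies in the image by a mapping-degree computation. Continuity of $\Phi_{\tilde h}$ follows from the continuous dependence of the Plateau solution, and of its conjugate, on the boundary data. To locate the target I would examine the flattening limit $\tilde h\to0$, in which the conjugate piece collapses onto the base triangle so that $\Phi_{\tilde h}\to\mathrm{id}$; on the other hand, for $\tilde h>0$ the strict defects $\alpha-\tilde\alpha=\vol(V_\alpha)>0$ and $\beta-\tilde\beta=\vol(V_\beta)>0$ from the proof of Lemma~\ref{lm:basic-properties-conjugate-polygon} push the image toward strictly larger angles. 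Together these show that for $\tilde h$ small and positive the image of $\Phi_{\tilde h}$ contains $(\frac\pi k,\frac\pi2)$ in its interior (note $\frac\pi k\le\frac\pi3<\frac\pi2$ since $k\ge3$), so $\Phi_{\tilde h}$ restricted to the boundary of a slightly shrunken square winds once about the target; the degree being nonzero, there is a choice of $(\tilde\alpha,\tilde\beta)$ for which $\Omega$ has exactly the prescribed angles.

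I expect this degree step to be the crux: one has to control the boundary behaviour of $\Phi_{\tilde h}$---the $\tilde h\to0$ flattening together with the monotonic defects $\vol(V_\alpha),\vol(V_\beta)$---precisely enough to certify that the target is genuinely interior to the image and that the degree does not vanish, which is more delicate than the explicit computation available when $\tilde a=\tilde b$. Granting this, everything else is immediate. Embeddedness of $M$ and its containment in $\Omega$ are supplied by Lemma~\ref{lm:basic-properties-conjugate-polygon}; since $h\le\ell(\overline{23})$ can be driven to zero by taking the initial side $\tilde a$ small, both $h$ and $r=\frac h\pi$ can be made as small as desired; and the reflection scheme already described assembles $8k$ copies of $M$ into a complete two-sided (hence orientable, cf.\ Proposition~\ref{lema:separation-S2xS1}) surface whose quotient $M_k$ has genus $g=1+\frac{8k}{4\pi}(\pi-\frac\pi2)=k+1$, completing the proof.
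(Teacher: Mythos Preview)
Your strategy is the paper's: reduce to producing a conjugate piece fitting a prism with data $(\frac\pi k,\frac\pi2,\frac\pi2,h)$, and obtain it via a degree argument in the style of~\cite{KPS88} on the two-parameter family of contours with $\gamma=\frac\pi2$ fixed. The ingredients you name---continuity of the angle map, the flattening $\Phi_{\tilde h}\to\mathrm{id}$ as $\tilde h\to0$, and the strict defects $\alpha>\tilde\alpha$, $\beta>\tilde\beta$ from Lemma~\ref{lm:basic-properties-conjugate-polygon}---are precisely those the paper uses, and your final assembly (embeddedness from Lemma~\ref{lm:basic-properties-conjugate-polygon}, smallness of $h$, genus count) is correct.

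The gap is in the degree step itself. Parametrizing by angles $(\tilde\alpha,\tilde\beta)$ on $]0,\frac\pi2]^2$ is problematic: a spherical triangle with $\gamma=\frac\pi2$ requires $\tilde\alpha+\tilde\beta>\frac\pi2$, so the map is not defined on the full square, and in any case the target $(\frac\pi k,\frac\pi2)$ lies on the edge $\beta=\frac\pi2$. If you ``slightly shrink'' away from that edge then on the top side you only know $\beta>\tilde\beta=\frac\pi2-\epsilon$, while the defect $\beta-\tilde\beta$ tends to $0$ with $\tilde h$, so the image of your loop need not enclose the target and the degree may well vanish. The paper avoids both issues by parametrizing via side lengths $(\tilde a,\tilde b)\in\,]0,\frac\pi2]^2$, where the whole rectangle is admissible, and taking the explicit rectangle $[\frac12,\frac\pi2]\times[\frac1{2k},\frac\pi k]$. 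Along $\tilde a=\frac\pi2$ the cosine rule forces $\tilde\beta=\frac\pi2$ exactly, so the strict defect gives $\beta>\frac\pi2$ \emph{independently of} $\tilde h$; along $\tilde b=\frac\pi k$ one checks $\tilde\alpha>\frac\pi k$, whence $\alpha>\frac\pi k$. On the remaining two edges spherical trigonometry yields $\tilde\beta<\frac\pi2$ and $\tilde\alpha<\frac\pi k$ respectively, and \emph{only then} does the flattening limit supply $\beta<\frac\pi2$ and $\alpha<\frac\pi k$ for $\tilde h$ small. This asymmetric use of the two effects---the defect inequality on two sides, the $\tilde h\to0$ limit on the other two---is what certifies the winding, and it is the piece your sketch does not furnish.
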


\begin{proof}
We only need to prove the existence of the minimal surface $M$ with data $(\frac\pi k,\frac\pi 2,\frac\pi 2,h)$, the existence of $M_k$ follows directly by Schwarz reflection.

We start with a polygonal Jordan curve $\tilde{\Gamma}$ as in Figure \ref{f:boundary1} with $\gamma=\frac{\pi}{2}$ and some $\tilde{h}>0$. We have seen that the minimal surface $M$ is then uniquely defined by $(\tilde{a},\tilde{b})$, so this defines a map $f\colon \R_{+}\times\,]0,\frac{\pi}{2}]^2\to \R^3,\,(\tilde{h},\tilde{a},\tilde{b})\mapsto(h,\alpha,\beta)$. The map $f$ is continuous, since a sequence $\{\tilde{M}_n\}$ of solutions  to the Plateau problem with data $(\tilde{h}_n,\tilde{a}_n,\tilde{b}_n)$, it has a converging subsequence with limit given by the solution to the Plateau problem with data $(\tilde{h}_\infty, \tilde{a}_\infty, \tilde{b}_\infty)=\lim_n(\tilde{h}_n,\tilde{a}_n,\tilde{b}_n)$. In order to justify this, observe that each $\tilde{M}_n$ can be extended by Schwarz reflection to a complete minimal surface $\hat{M}_n$. Since the ambient geometry is bounded and the surfaces have uniformly bounded second fundamental forms, general convergence arguments show that there exists a subsequence of $\{\hat{M}_n\}$ converging  to a complete minimal surface $\hat{M}_\infty$ in the $\mathcal{C}^k$-topology on compact subsets for every $k\geq 0$. Since every surface $\hat{M}_n$ contains the polygon $\tilde{\Gamma}_n$ associated to $(\tilde{h}_n,\tilde{a}_n,\tilde{b}_n)$, the limit surface $\hat{M}_\infty$ contains the polygon $\tilde{\Gamma}_\infty$ associated to $(\tilde{h}_\infty, \tilde{a}_\infty, \tilde{b}_\infty)$. Moreover, $\hat{M}_\infty$ is also a graph in the interior of the polygon, so it coincides with the solution to the Plateau problem with respect to $\tilde{\Gamma}_\infty$ (note that such a solution is unique).

In order to prove the existence of a minimal surface $M$, i.e., the existence of a triple $(\tilde{h}_0,\tilde{a}_0,\tilde{b}_0)$ such that $f(\tilde{h}_0,\tilde{a}_0,\tilde{b}_0)=(h_0,\frac\pi k,\frac\pi 2)$ for some $h_0>0$, we use a degree argument (recall that $\tilde{\alpha}, \tilde{\beta}\in\ ]0, \tfrac{\pi}{2}]$ so $\tilde{a}, \tilde{b} \in\ ]0, \tfrac{\pi}{2}]$). We consider the map 
\[
  f_{\tilde{h}}\colon]0,\tfrac\pi2]\times\ ]0,\tfrac\pi2]\to \R^2,\,(\tilde{a},\tilde{b})\mapsto(\alpha,\beta)
\]
and show there exists a closed Jordan curve $c\colon \R\to ]0,\frac\pi2]^2$ such that the image $f_{\tilde{h}}\circ c$ is a closed curve around $(\frac\pi k,\frac \pi 2)$. We compose $c$ of four straight lines $c_i$, $i\in\{1,\ldots, 4\}$, namely,
\begin{multicols}{2}
\begin{itemize}
\item $c_1(t)=\!(\frac\pi2,t)$ with $t\in[\frac{1}{2k},\frac{\pi}{k}]$,
\item $c_2(t)=(t,\frac{\pi}{k})$ with $t\in[\frac12,\frac\pi2]$,
\item $c_3(t)=(\frac12,t)$ with $t\in[\frac{1}{2k},\frac\pi k]$,
\item $c_4(t)=(t,\frac1{2k})$ with $t\in[\frac12,\frac\pi2]$,
\end{itemize}
\end{multicols}
\noindent and we claim there exists $\tilde{h}>0$ such that $f_{\tilde{h}}\circ c$ has the desired property.

Since $\tilde{\gamma}=\tilde{a}=\frac\pi2$ along $c_1$, we have $\beta>\tilde{\beta}=\frac\pi2$ (by the cosine rule $\cos(\tilde{\beta}) = 0$ since $\gamma = \tilde{a} = \tfrac{\pi}{2}$) along $f_{\tilde{h}}\circ c_1$ (see Lemma~\ref{lm:basic-properties-conjugate-polygon}). Likewise, $\alpha>\tilde\alpha>\frac\pi k$ along $f_{\tilde{h}}\circ c_2$ (by cosine rule $\cos(\tilde{\alpha})= \sin(\tilde{\beta})\cos(\tilde{b}) < \cos(\tilde{b}) = \cos(\tfrac{\pi}{k})$). Now, $\tilde{\beta} < \tfrac{\pi}{2}$ along $c_3$ since, by the cosine rule, $\cos(\tilde{\beta}) = \cos(\tilde{a}) \sin(\tilde{\alpha})  = \cos(\tfrac{1}{2})\sin(\tilde{\alpha})> 0$. Moreover, we will also show that $\tilde{\alpha} < \tfrac{\pi}{k}$ along $c_4$. First, by the sine rule, 
\[
\sin(\tilde{\beta}) = \frac{\sin(\tilde{\alpha})}{\sin(\tilde{b})}\sin{(\tilde{a})} > \frac{\sin(\tilde{\alpha})}{\sin(\tfrac{1}{2k})} \sin(\tfrac{1}{2}), \quad \text{since } \tilde{a} \in\  ]\tfrac{1}{2},\tfrac{\pi}{2}[ \text{ along $c_4$}.
\]
By the cosine rule, $\cos(\tilde{\alpha}) = \sin(\tilde{\beta})\cos(\tilde{b}) = \sin(\tilde{\beta})\cos(\tfrac{1}{2k})$. Hence
\[
\cos(\tilde{\alpha})> \sin(\tfrac{1}{2})\cot(\tfrac{1}{2k}) \sin(\tilde{\alpha}),
\]
so $\tilde{\alpha} < \arccot\bigl(\sin(\tfrac{1}{2})\cot(\tfrac{1}{2k})\bigr) < \tfrac{\pi}{k}$. Therefore, we have showed that $\tilde\beta<\frac\pi2$ (resp. $\tilde\alpha<\frac{\pi}{k}$) along $c_3$ (resp.\ $c_4$). Since $\alpha\to\tilde{\alpha}$ and $\beta\to\tilde{\beta}$ for $\tilde{h}\to 0$, we deduce that there exists $0 < \tilde{h}_0\leq \tfrac{\pi}{2}$ such that $\beta<\frac\pi 2$ along $f_{\tilde{h}}\circ c_3$ and $\alpha<\frac\pi k$ along $f_{\tilde{h}}\circ c_4$ for all $\tilde{h}\leq\tilde{h}_0$. Note that we can choose $\tilde{h}_0$ such that Lemma~\ref{lm:basic-properties-conjugate-polygon} applies and we are done.
\end{proof}

\begin{remark}
Note that the Schwarz P-type example of genus $2k-1$, $k\geq 2$, (see Section~\ref{subsec:odd-genus}) induces a compact embedded minimal surface in $\RP^2\times \s^1(r)$ if and only if $k = 2m$. In this case, we get a two-sided non-orientable embedded minimal surface of genus $4m$, $m \geq 1$.

The orientable arbitrary genus embedded minimal surface $M_k$, $k \geq 3$, constructed in Proposition~\ref{prop:orientable-arbitrary-genus-examples} induces a compact embedded minimal surface in $\RP^2\times \s^1(r)$ if and only if $k = 2m$. In this case, the induce surface is two-sided non-orientable embedded and minimal with genus $2(1+m)$, $m \geq 2$.
\end{remark}

\subsection{Final remarks} 
In this section we are going to point out the reason why it is difficult to obtain a better result in $\s^2 \times \s^1(r)$ using the Plateau construction technique for all $r$. First we state the following general properties:

\begin{proposition}\label{prop:symmetries}
\begin{enumerate}[(i)]
  \item Let $\Sigma$ be a properly embedded minimal surface of $\s^2 \times \R$. If $\Sigma$ contains a vertical geodesic, then it also contains the antipodal geodesic. More precisely if $\{p\} \times \R \subset \Sigma$, then $\{-p\}\times \R \subset \Sigma$.
  \item Let $\Sigma$ be an oriented, embedded minimal surface of $\s^2 \times \s^1(r)$ different from a finite union of horizontal slices. If $\Sigma$ contains a horizontal geodesic, then it also contains another horizontal geodesic at vertical distance $\pi r$. More precisely, if $\Gamma\times \{0\} \subset \Sigma$, $\Gamma$ a great circle of $\s^2$, then $\Gamma\times\{\pi r\} \subset \Sigma$.
\end{enumerate}
\end{proposition}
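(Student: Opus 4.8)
The plan is to treat both statements by a single mechanism. A minimal surface that contains an ambient geodesic is invariant under the $\pi$-rotation about that geodesic (Schwarz's reflection principle: the rotated surface is again minimal, shares the geodesic, and has the same tangent plane along it, so by unique continuation it coincides with the original). This $\pi$-rotation is orientation-preserving on the ambient $3$-manifold but orientation-reversing on $\Sigma$, and hence it must \emph{interchange} the two components of the complement of $\Sigma$. A geodesic fixed pointwise by such a rotation can then meet neither open component, so it is forced to lie on $\Sigma$ itself.

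For (i) I would first reduce to the case in which $\Sigma$ is connected by replacing $\Sigma$ with the component containing $\{p\}\times\R$. Let $\rho_p(x,t)=(R(x),t)$, where $R$ is the rotation by $\pi$ of $\s^2$ about $p$; this is precisely the $\pi$-rotation about the geodesic $\{p\}\times\R$ and is an orientation-preserving isometry of $\s^2\times\R$. Since $\Sigma$ contains $\{p\}\times\R$, the reflection principle gives $\rho_p(\Sigma)=\Sigma$, and by Lemma~\ref{lema:separation-S2xR} the surface $\Sigma$ is orientable and $(\s^2\times\R)\smallsetminus\Sigma=W_1\cup W_2$. Computing $\mathrm{d}\rho_p$ along $\{p\}\times\R$ one finds it fixes the vertical direction and negates each horizontal tangent vector, so $\mathrm{d}\rho_p=\mathrm{diag}(1,-1)$ on $T\Sigma$ and $\rho_p$ is orientation-reversing on $\Sigma$; together with the fact that it preserves the ambient orientation, this forces $\rho_p$ to swap $W_1$ and $W_2$. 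Finally, $\rho_p$ fixes $\{-p\}\times\R$ pointwise, so no point of $\{-p\}\times\R$ can lie in $W_1$ or $W_2$ (it would be sent into the other component), whence $\{-p\}\times\R\subset\Sigma$.

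For (ii) I would argue entirely in $\s^2\times\s^1(r)$ with the isometry $\rho_\Gamma(x,t)=(r_\Gamma(x),-t)$, where $r_\Gamma$ is the reflection of $\s^2$ across the great circle $\Gamma$. This $\rho_\Gamma$ is orientation-preserving (a product of two orientation-reversing maps), it is exactly the $\pi$-rotation about $\Gamma\times\{0\}$, and its fixed-point set is $(\Gamma\times\{0\})\cup(\Gamma\times\{\pi r\})$, because $-t\equiv t$ in $\R/2\pi r$ holds precisely for $t\in\{0,\pi r\}$. By Proposition~\ref{prop:intersection} the surface $\Sigma$ is connected (a disconnected embedded minimal surface would be a union of slices), and by Proposition~\ref{lema:separation-S2xS1} its orientability yields a separation $(\s^2\times\s^1(r))\smallsetminus\Sigma=W_1\cup W_2$. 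Since $\Sigma$ contains $\Gamma\times\{0\}$, the reflection principle gives $\rho_\Gamma(\Sigma)=\Sigma$, and the same local computation as before shows $\rho_\Gamma$ reverses the orientation of $\Sigma$ and therefore swaps $W_1$ and $W_2$. As $\Gamma\times\{\pi r\}$ is fixed pointwise, the swapping argument forces $\Gamma\times\{\pi r\}\subset\Sigma$.

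The delicate point throughout is the orientation bookkeeping that makes the rotation swap the two sides rather than preserve them: one needs the reflection principle to guarantee $\rho(\Sigma)=\Sigma$, the separation results of Lemma~\ref{lema:separation-S2xR} and Proposition~\ref{lema:separation-S2xS1} to produce the two components $W_1,W_2$ in the first place, and the computation $\mathrm{d}\rho=\mathrm{diag}(1,-1)$ along the contained geodesic to certify that $\rho$ is orientation-reversing on $\Sigma$ while orientation-preserving on the ambient space. Verifying that $\Sigma$ is connected and two-sided, so that these tools apply, is the only real subtlety; once the sides are interchanged, the conclusion that the remaining fixed geodesic lies on $\Sigma$ is immediate.
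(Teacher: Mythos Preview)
Your proof is correct and follows essentially the same strategy as the paper's: use Schwarz reflection to show the rotation $\rho$ preserves $\Sigma$, use the separation results to obtain two complementary components, argue that $\rho$ interchanges them, and conclude that the remaining fixed locus of $\rho$ lies on $\Sigma$. The paper's version is terser and simply asserts that $\rho$ ``interchanges the connected components of the complement''; your orientation bookkeeping (that $\mathrm{d}\rho$ acts as $\mathrm{diag}(1,-1)$ on $T\Sigma$ along the contained geodesic, hence reverses the normal) supplies the justification the paper omits.
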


\begin{proof}
Both assertions follow from two facts: (1) under the hypothesis the surface $\Sigma$ separates the ambient space in two different connected components; (2) there exists an isometry $\rho$ of the ambient space which preserves the surface by the Schwarz reflection principle but interchanges the connected components of the complement of $\Sigma$. Hence, the fixed points of $\rho$ must be contained in $\Sigma$. In the first case, $\rho$ is the reflection around $\{p\} \times \R$ whereas in the second case $\rho$ is the reflection around $\Gamma \times \{0\}$. The first assertion appears in~\cite[\S 2]{HW}.
\end{proof}

One important consequence of this proposition is that, generically, the oriented compact embedded minimal surfaces constructed solving and reflecting a Plateau problem have odd genus. More precisely, let $M$ a minimal disk (different from an open subset of a slice) spanning a polygon $\Gamma$ made of horizontal and vertical geodesics. Suppose that $M$ produces a compact surface $\Sigma$ by Schwarz reflection, and the angle function $\nu = \prodesc{N}{\xi}$, where $N$ is the unit normal to $M$, satisfies $\nu^2\lvert_{M} < 1$ and $\nu^2\lvert_\Gamma = 1$ only at the points of $\Gamma$ where two horizontal geodesics meet. Then $\Sigma$ has odd genus if it is orientable or it has even genus if it is not.

In order to prove this, we consider the vector field $T = \xi - \nu N$ which is the tangent part of $\xi$, so $\abs{T}^2 = 1-\nu^2$, i.e., $T$ only vanishes in the points where $\nu^2 = 1$ that turns out to be the points where the Hopf differential associated to $\Sigma$ vanishes. The zeros of $T$ appear in multiples of $4$ since if $(p,0) \in \Sigma$ is such a point then it is an intersection point of two horizontal geodesics $\gamma_1 \times \{0\}$ and $\gamma_2\times \{0\}$ contained in the surface $\Sigma$ and so the point $(-p,0) \in (\gamma_1\cap \gamma_2) \times \{0\}$ is also in $\Sigma$ and $T_{(-p,0)} = 0$. Moreover, by Proposition~\ref{prop:symmetries}, the horizontal geodesics $\gamma_j \times\{\pi r\}$, $j = 1, 2$, are contained in $\Sigma$ so $(p, \pi r)$ and $(-p, \pi r)$ are also zeros of $T$. Applying the Poincaré-Hopf theorem to the vector field $T$, the Euler characteristic of $\Sigma$ must be $-4k$, that is, the genus of $\Sigma$ is $g = 2k + 1$ if $\Sigma$ is orientable or $g = 2(1+2k)$ if it is not.

Note that this trick is not useful in general for the conjugate surface of $M$ (we assume that the conjugate piece can be reflected to obtain a compact surface): Although the angle function is preserved by conjugation and so are the zeros of the vector field $T$, we can not guarantee in general that the zeros of $T$ occur in the intersection of two horizontal geodesics in the conjugate piece.

\end{document}